\title{Bibliography management: BibTeX}
\def\r{\hbox{${ \mathbb R}$}}
\newcommand{\di}{\displaystyle}
\newcommand{\mumax}{\mu_{\max}}
\newcommand{\s}{\mathcal{S}}
\theoremstyle{thmstyleone}%
\newtheorem{theorem}{Theorem}%  meant for continuous numbers
\newtheorem{proposition}[theorem]{Proposition}% 
\theoremstyle{thmstyletwo}%
\theoremstyle{thmstylethree}%
\begin{document}

\title[ A Complete Mathematical Model For  Trichoderma Fungi Kinetics]{ A Complete Mathematical Model For  Trichoderma Fungi Kinetics}

%%=============================================================%%
%% Prefix	-> \pfx{Dr}
%% GivenName	-> \fnm{Joergen W.}
%% Particle	-> \spfx{van der} -> surname prefix
%% FamilyName	-> \sur{Ploeg}
%% Suffix	-> \sfx{IV}
%% NatureName	-> \tanm{Poet Laureate} -> Title after name
%% Degrees	-> \dgr{MSc, PhD}
%% \author*[1,2]{\pfx{Dr} \fnm{Joergen W.} \spfx{van der} \sur{Ploeg} \sfx{IV} \tanm{Poet Laureate} 
%%                 \dgr{MSc, PhD}}\email{iauthor@gmail.com}
%%==================================
%===========================%%

\author*[1]{\fnm{Asmae} \sur{Hardoul}}\email{asmae.hardoul@uit.ac.ma}

\author[2]{\fnm{Zoubida} \sur{Mghazli}}\email{zoubida.mghazli@uit.ac.ma}

\affil[1]{\orgdiv{Ibn Tofail University, Faculty of Sciences, \'Equipe d'Ing\'enierie MAth\'ematique (E.I.MA.)}, \orgname{  PDE, Algebra and Spectral Geometry Laboratory}, \orgaddress{%\street{Street},
\city{Kenitra}, \postcode{BP 133}, %\state{State},
\country{Morocco}}}

%%==================================%%
%% sample for unstructured abstract %%
%%==================================%%
\abstract{We develop an unstructured mathematical model to describe the growth kinetics of the Trichoderma fungus and its enzyme (cellulase) production in the rhizosphere. This model incorporates hydrolysis, where organic matter is broken down, producing a liquefied substrate that supports fungal growth and enzyme synthesis. The resulting 
Captures the interactions between substrate degradation, fungal growth, and enzyme formation. Analysis of the model's asymptotic behavior reveals convergence to a global attractor comprising infinite non-hyperbolic equilibria, depending on initial conditions.  Numerical simulations with data from the literature confirm the theoretical study and validate the
model.}

\keywords{Modeling; Ordinary Differential Equations;  Kinetic models; Trichoderma; Hydrolysis; Global attractor.}

%%\pacs[JEL Classification]{D8, H51}

\pacs[MSC Classification]{92-10, 37N25, 34D35, 34D05, 34D45.}

\maketitle
\section{Introduction}\label{sec1}
Trichoderma is a fungus that grows in almost all soils and is a constituent of the fungal communities of the rhizosphere. They are also present on the root surfaces of many plants \cite{Harman 1998}.
 They play an important role in the soil nutrient cycle. Indeed,  they 
 have a beneficial effect on plant growth and soil
 fertility,  
 \cite{2007, 2016}. 
  Fungi of the genus Trichoderma have been used as biological control agents against a wide spectrum of soil-borne pathogens.
Their antagonistic behavior has been demonstrated in numerous researches.   
\cite{davet, 42005, 2005, 2009}.
    It has also been shown that these fungi play a key role in the production of a wide range of hydrolytic extracellular enzymes such as cellulase, which allows the degradation of cellulose  %\cite{Mandels 1975}, 
    \cite{Schuster}. These enzymes are also involved in the suppression of plant diseases \cite {Elad 1982} and generally have an antifungal effect. They are also highly synergistic in their antifungal activity when combined with fungicides whose mode of action affects the cell membranes of pathogenic fungi \cite{Lorito 1994}.
    
Kinetic models for the production of cellulase by Trichoderma reesei have been developed in various research \cite{ lo 2010,  Rakshit 1991, bader 1993,  Velkovska 1997, Muthuvelayudham 2007}, in which a soluble substrate has been used. For example, in the reference \cite{Rakshit 1991}, a kinetic model of cell growth and cellulase formation has been developed, without using a substrate consumption equation.
Muthuvelayudham and Viruthagiri \cite{Muthuvelayudham 2007}, found that the logistic model,  the Luedeking-Piret model and the integrated logistic model with Leudeking-Piret could accurately describe the cellulase production process, but they did not provide any numerical simulation. 
Bader et al \cite{bader 1993}, developed detailed kinetics for the production of an individual enzyme component, but cellulose data were not available although the model was provided.
 The most comprehensive kinetic models, including cell growth, substrate consumption, and cellulase production from cellulose, were first introduced by Velkovska et al \cite{Velkovska 1997}.
 Their study developed a simple model of product formation using total enzyme and variable substrate reaction models. However, the model was unable to predict the biomass curves. Furthermore, the effect of the substrate on cellulose production is not considered, and the rate of hydrolysis is assumed to be equal to the rate of consumption of the substrate throughout the process, which is usually not the case.
 Lijuan Ma et al \cite {ma 2013}, developed a simple  and complete model for the cellulase production process using cellulose, but did not consider the product equation. Experimental data on cellulose, biomass and cellulase were used for parameter estimation and validation of the model by comparing the simulations with the experimental results of (Velkovska et al. \cite{Velkovska 1997}).

Hydrolysis is a key step in the biodegradation of organic matter, since it converts complex compounds into soluble substrates that can be directly assimilated by microorganisms. As shown by Vavilin et al.~\cite{Vavilin}, in mechanistic models of solid matter degradation, this step is generally limiting for substrate availability. In the case of Trichoderma, which cannot directly utilize native cellulose, enzymatic hydrolysis by extracellular cellulases completely determines fungal growth. This is confirmed by recent studies on Trichoderma ~\cite{Duarte2021}, as well as several works showing that the substrate flux resulting from hydrolysis governs biomass dynamics and enzyme production (Bischof et al. \cite{bischof}; Li et al.\cite{li}; Qian et al.\cite{qian}). A hydrolysis constant that is too low,  therefore, limits substrate availability and slows microbial growth. Thus, it is essential to establish appropriate assumptions regarding this constant to ensure a realistic and stable representation of the process.

To the best of our knowledge, none of the mathematical models describing the growth kinetics of biomass (Trichoderma) and enzyme production (cellulase) during substrate degradation has taken the hydrolysis step into account.
 
 The objective of this paper is the development and analysis of a complete system that models the kinetics of enzyme production (cellulase with cellulose) by the  
 fungus Trichoderma in the rhizosphere. Inspired by the model of Lijuan Ma et al. \cite{ma 2013}, we develop an unstructured mathematical model that integrates the step of hydrolysis of organic matter and takes into account the formation of a product (cellulase), which makes it a more realistic and complete model.
A fraction of dead microorganisms can constitute a new substrate. According to \cite{alpha}, they are then recycled as organic matter to be hydrolyzed. In
 the proposed model, we consider the mortality parameter, as well as the substrate consumption maintenance parameter, and the product formation maintenance parameter.
  We analyze the asymptotic behavior of the dynamics of the system obtained and prove that we have a continuum of non-hyperbolic equilibria. An analogous analysis was performed in \cite{saleh},  for a problem related to an anaerobic digestion model in the landfill. We show that each trajectory of the system is bounded and converges towards one of these non-hyperbolic equilibria according to the initial conditions. Some numerical tests are presented.
 
 The article structure is as follows. The model and assumptions are introduced and discussed in the next section.
Section 2 gives a mathematical analysis of the asymptotic behaviour and the set of equilibria.
Finally, in Section 3, numerical simulations for different values of data from the literature and different initial conditions are given, confirming the theoretical study.  A discussion, conclusions, and bibliography conclude this article.
%______________________________
\section{ Model and hypotheses}
%-----------------------------
% 
Modeling the growth and product formation of various micro-organisms is generally a very difficult task due to the complexity of living systems.
The number of factors that influence the ability of the organism to grow and produce enzymes is very large and leads to complex complex systems with many parameters.
In order to build a model that is accessible to mathematical analysis and numerical simulations, while taking into account the essential factors to describe the kinetics of enzyme production by the filamentous fungus Trichoderma in the rhizosphere, we will make the following assumptions.
As a first assumption, we assume that the overall biomass has a single morphological form and that it is fed by a single substrate. We also assume that the extracellular medium (soil) is perfectly homogeneous.  The model   is based on the principle of mass balance applied to   variables that are the concentrations  of the components of the system. We note by $X$, $B$, $s$ and $P$ the concentrations of organic matter,
 living biomass, 
substrate and
 product (enzyme), respectively.

%..............................................
% \subsection{Modelling}
%------------------------

The dynamics of the biomass $B$, taking into account the biological process of mortality,  are given by 

\begin{equation} \label{dB/dt = mu B -kd B}
\frac{dB}{dt} =  \mu(s)\, B- k_{d}\, B,
 \end{equation}
where $\mu(s) $ is the growth function and  $k_{d}$ is the specific mortality rate. 

 The concentration of the substrate $s$ and the cell growth are obviously interdependent. As the cells grow, they use up the substrate, which is thus depleted. The rate of substrate consumption is described by the equation :
 \begin{equation}\label{ s }
 \frac{ds}{dt}   % = - %\frac{1}{Y_{B/s}}  \frac{dB}{dt}. 
 = -  \frac{1}{Y_{B/s}} \mu(s) B,
\end{equation}
where $Y_{B/s}$ is the coefficient of efficiency of conversion of the substrate $s$ into the biomass $B$.
 The substrate utilization kinetics, given in Eq. \eqref{ s }, can be modified to take into account the cell maintenance, which means living consumes substrate:    
 \begin{equation}\label{ds/dt}
    \frac{ds}{dt}	= -  \>\frac{1}{Y_{B/s}} \>  \mu(s) \; B   -  m_{s}\> B,   
\end{equation}
Where $m_s$ is the coefficient of cell-specific maintenance by the substrate $s$.

To model the kinetics of the formation of the product $P$, we use the Luedeking-Piret model ( see \cite{Luedeking 1959}):
%\begin{align*}
%    \frac{dP}{dt}	=  %\frac{1}{Y_{P/s}}\>\>\frac{dB}{dt}  + % m_{P}\>\> B.
%\end{align*}
%or
 \begin{align}\label{dp/dt}
    \frac{dP}{dt}	=  \frac{1}{Y_{P/s}} \>\mu(s) \; B  +  m_{P}\> B, 
\end{align}
where $Y_{P/s}$ is the conversion efficiency of a substrate $s$ to a product $P$, expressed as {\it"the quantity of $P$ formed per quantity of $s$ consumed"}, and $m_P$ is the cell-specific maintenance coefficient for the product $P$. We note that Equation \eqref{dp/dt} describing the formation of the product $P$, is related to the growth rate of $B$ and its maintenance.\\

We assume that the parameters satisfy the following assumptions.
\begin{description}
\item[\bf Hypothesis 1]  The specific growth rate function $\mu(\cdot)$ is of class $C^1$ with $\mu(0) = 0$ and $\mu(s) >0$ for all $s> 0$.
\item[\bf Hypothesis 2]   The coefficients $ k_d, Y_{B/s}, Y_{P/s}, m_s$ and $m_P $ satisfy the following conditions.
 \begin{enumerate}
     \item 
     $0< k_d<\displaystyle\max_s\mu(s)$

\item 
$0<Y_{B/s}<1\;\;$ and $\;\;0<Y_{P/s}<1$ 

 \item  
 $m_s>0\;\;$  and  $\;\;m_P>0$.
\end{enumerate}
\end{description}  All the conditions of {\bf Hypothesis 2} are verified in real biological phenomena.  
  A large number of growth rate functions satisfy {\bf Hypothesis 1}. The Monod law \cite{monod 1942} is the most widely used, and many adaptations have been made to this growth model, such as the Teissier law, the Contois law and the Dabes et al law \cite{law2}, where the growth rate is limited only by the substrate.   Other models take into account an inhibition of,  at least, one of the variables, such as the Haldane law, the Luong law, the Aiba law, the Han and Levenspiel law, the Yano law and the Luong law \cite{law1}.
 It is known that Trichoderma fungi have no inhibitors for their growth.   So, Monod's law is suitable for our problem. It takes the form:
\begin{equation}\label{monod}
     \mu(s)=\mumax\frac{s}{k_{s}+s},
\end{equation}
where $\mumax$ is the maximum growth rate and $k_s$ is the half-saturation constant.\\

Hydrolysis constitutes the key step in the valorization of lignocellulosic substrates because Trichoderma is not capable of directly assimilating native cellulose. Access to assimilable carbon depends exclusively on the extracellular enzymatic conversion of the polymeric matrix into soluble sugars (glucose, cellobiose), which subsequently feed intracellular metabolic pathways. Consequently, the efficiency of hydrolysis directly determines the amount of soluble substrate available to the biological system.
When the cellulolytic activity is high, the hydrolytic flux ensures a continuous supply of carbon, enabling sustained biomass growth. In contrast, slow hydrolysis leads to a limitation in soluble substrate, even when the physiological potential for fungal growth remains high. In this case, the limiting factor does not stem from an intrinsic metabolic cap but from an insufficient upstream hydrolytic flux.
This analysis is fully consistent with the experimental evidence available in the literature. Several studies have shown that the intensity of extracellular cellulase secretion by Trichoderma is strongly correlated with the solubilization of cellulose and the rate of biomass accumulation. Bischof et al. (2013)\cite{bischof} demonstrated that enhanced enzyme production results in a faster release of soluble sugars and a significant increase in cellular productivity. Similar findings were reported by Li et al. (2016)\cite{li} and Qian et al. (2017)\cite{qian}, who emphasized that improved hydrolytic performance systematically increases bioavailable carbon flux.
More recently, Duarte et al. (2021)\cite{Duarte2021} experimentally confirmed, using Trichoderma longibrachiatum isolates, that the efficiency of lignocellulosic forage hydrolysis directly controls the availability of soluble sugars as well as the growth and metabolic activity dynamics. Their results show that increasing cellulolytic activity immediately accelerates hydrolysis and enhances carbon release, thus validating the central role of the hydrolysis rate in the governance of the system.
Together, these observations, both classical and recent, demonstrate that hydrolysis acts as the primary control step of mycelial growth dynamics in mechanistic degradation models. In standard models, hydrolysis is described by first-order kinetics: 
\begin{equation}\label{kh}
    \frac{dX}{dt} =  -K_{H} X,
\end{equation}
where $K_H$ is the hydrolysis constant.
 According to 
  \cite{alpha}, a fraction $\alpha$ of dead microorganisms limited by biological constraints: $0\leq \alpha<1$ can constitute a new substrate and is then recycled as a material to be hydrolyzed. In this case, the last equation becomes :
\begin{equation}\label{eq1}
    \frac{dX}{dt} =  -K_{H} \> X +\alpha \> k_{d}\>B .
\end{equation}

Moreover, since hydrolysis transforms organic matter into a new substrate, the term $K_{H} X $ is therefore a source term in Eq. \eqref{ds/dt} which becomes
\begin{equation}\label{eqs}
    \frac{ds}{dt}	= -\left(\frac{1}{Y_{B/s}}  \mu(s)  + m_{s}\right) B  + K_{H}  X  .
\end{equation}

%______________________

%\subsection{Hypotheses}

From equation \eqref{eqs}, the dynamics of the substrate is based on the opposition between the consumption induced by biomass, represented by $\left(\frac{1}{Y_{B/s}}  \mu(s)  + m_{s}\right) B$, and the flux of the substrate produced by hydrolysis, given by $K_{H}  X $.\\

To represent a biologically realistic situation in which Trichoderma has a carbon flux that supports its growth, hydrolysis production must be sufficient at all times to compensate for metabolic consumption. This requirement leads to the introduction of a hydrolysis efficiency, which states that the substrate flux resulting from hydrolysis must be sufficient to compensate for the consumption induced by the biomass.\\

{\bf Hypothesis 3 :} for all $t>0$:
\begin{align}\label{h3}
   K_H X(t) \;\geq\; \max_{s \in [0, s_{\max}]} \left\{ \frac{1}{Y_{B/s}} \,\mu(s) + m_s \right\} \, B(t) 
\end{align}
This condition reflects the biological reality of the system. For Trichoderma, growth can occur only if enough sugars are produced by hydrolysis of cellulose. If consumption by the biomass exceeds hydrolytic production, the available substrate becomes insufficient and growth slows down, not because of a physiological limitation of the fungus, but because of a lack of assimilable substrate. The introduced inequality therefore ensures that the model remains consistent with reality: it guaranties that the soluble substrate never becomes limiting, thus allowing growth that is consistent with experimental observations. {\bf Hypothesis 3} establishes a direct link between hydrolysis kinetics and microbial dynamics, thereby constituting an essential component for the accurate modeling of microbial growth and substrate utilization.\\

Building upon the equations that describe the underlying dynamics and the key assumptions outlined above, we construct a comprehensive mathematical model. This model comprises four coupled equations representing the temporal evolution of organic matter $X$, biomass $B$, substrate $s$, and product $P$:
   \begin{align} 
\left\{ \begin{array}{rcll}\label{model s1}
\displaystyle
\frac{dX}{dt} &=&  -K_{H}\, X + \alpha\, k_{d}\, B .\\
%&&\\
\displaystyle
   \frac{dB}{dt} &=&  \biggl(\mu(s) - k_{d}\biggr)\, B .\\
 %  &&\\
   \displaystyle
  \frac{ds}{dt}	&=& -\left(\di\frac{1}{Y_{B/s}}\, \mu(s)   + m_{s}\right)\,  B  + K_{H}\,X  . \\
 % &&\\
  \displaystyle
    \frac{dP}{dt}	&=& \left(\di\frac{1}{Y_{P/s}}\, \mu(s)  + m_{P} \right)\, B,     
     \end{array}\right.
    \end{align}
     which can be written as
\begin{equation}\label{eqsNonHomogene}
    \dfrac{dV}{dt}=A(V)V,
\end{equation}
where
\begin{align}\label{AetX}
V=\begin{pmatrix}
     X\\B\\s\\P
     \end{pmatrix}\quad \mbox{ and }\quad
    A(V) = \begin{pmatrix}
-K_H & \alpha  k_{d} & 0 & 0  \\ 0 & \mu(s)  - k_{d} & 0 & 0 \\ K_H & -\left(\frac{1}{Y_{B/s}} \mu(s)+ m_{s}\right)& 0 & 0 \\
0 & \frac{1}{Y_{P/s}} \mu(s)+ m_{P}& 0 & 0
\end{pmatrix}.
 \end{align}

%________________________________________
\section{Equilibrium points and    asymptotic behaviour}
%----------------------------------------
 This section presents an analysis of the system \eqref{model s1}.  We show that the solution remains positive,   determine the equilibrium points, and give the asymptotic behavior of the solution.
%____________________________________
\subsection{Positivity of the solution and equilibrium points}
%------------------------------------
We begin by proving the positivity of the solution.
\begin{proposition} \label{p0}
For any vector $(X_0,B_0,s_0,P_0)$   with non-negative components, the solution of the system \eqref{model s1} with the initial condition $(X(0),B(0),s(0),P(0))=(X_0,B_0,s_0,P_0)$ exists and is unique and non-negative.
\end{proposition}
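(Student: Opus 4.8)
The plan is to treat \eqref{model s1} as an initial value problem $\dot V=F(V)$ with $F(V)=A(V)V$ on $\mathbb{R}^4$, and to argue in three stages: local well-posedness, invariance of the non-negative orthant, and global extension. First I would establish local existence and uniqueness. By \textbf{Hypothesis 1} the function $\mu$ is $C^1$, and every remaining coefficient appearing in $A(V)$ is constant, so each component of $F$ is a finite product of $C^1$ functions; hence $F$ is $C^1$, in particular locally Lipschitz, in a neighbourhood of every non-negative state (for the Monod law, on the whole region $\{s>-k_s\}$). The Cauchy--Lipschitz (Picard--Lindel\"of) theorem then yields, for each datum $(X_0,B_0,s_0,P_0)$, a unique maximal solution on an interval $[0,T_{\max})$.

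Next I would show that $\mathbb{R}^4_+$ is positively invariant, which gives the claimed non-negativity. The cleanest coordinate is $B$: since $\frac{dB}{dt}=(\mu(s)-k_d)B$ is linear in $B$, one has the explicit representation $B(t)=B_0\exp\!\left(\int_0^t(\mu(s(\tau))-k_d)\,d\tau\right)\ge 0$, so $B\ge 0$ (and $B>0$ iff $B_0>0$). For $X$, on the face $\{X=0\}$ the field reads $\frac{dX}{dt}=\alpha k_d B\ge 0$ and so points inward; for $P$, on $\{P=0\}$ one has $\frac{dP}{dt}=\big(\tfrac{1}{Y_{P/s}}\mu(s)+m_P\big)B\ge 0$ as soon as $s\ge 0$ and $B\ge 0$. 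Thus $X,B,P$ stay non-negative \emph{provided $s$ does}.

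The main obstacle is precisely the substrate face. Using $\mu(0)=0$, on $\{s=0\}$ the substrate equation reduces to $\frac{ds}{dt}=K_H X-m_s B$, so the maintenance term $-m_s B$ blocks the quasi-positivity (tangency) condition $\frac{ds}{dt}\big|_{s=0}\ge 0$ from holding a priori. To close this step I would set $t^\ast=\inf\{t:s(t)<0\}$ and look for a comparison/barrier argument preventing $s$ from crossing zero on $[0,t^\ast]$: treating $B(\cdot)\ge 0$ as a known source, using $\mu(s)\le \mu'(0)s+o(s)$ near $0$, and exploiting the recycling flux $K_H X$ fed by $\frac{dX}{dt}=-K_H X+\alpha k_d B$, one tries a Gronwall-type lower bound for $s$. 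I expect the bulk of the work, and the real subtlety, to concentrate here, since a naive tangency argument fails and the positivity of $s$ seems to genuinely require the parameter constraints of \textbf{Hypothesis 2} (and possibly a structural relation between $m_s$ and the recycled inflow).

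Finally, granted non-negativity, global existence follows from an a priori bound. The functional $L=X+s+\tfrac{1}{Y_{B/s}}B$ satisfies $\frac{dL}{dt}=\big(\alpha k_d-m_s-\tfrac{k_d}{Y_{B/s}}\big)B\le 0$, because $\alpha<1$ and $Y_{B/s}<1$ force $\alpha k_d<k_d<\tfrac{k_d}{Y_{B/s}}$ (\textbf{Hypothesis 2}). Hence $L(t)\le L(0)$, which, combined with non-negativity, bounds $X,s,B$ on $[0,T_{\max})$; then $\mu(s)$ is bounded, so $\frac{dP}{dt}$ is bounded and $P$ grows at most linearly. A maximal solution that stays bounded cannot blow up in finite time, so $T_{\max}=\infty$ and the solution is global, completing the proof.
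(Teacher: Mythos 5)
Your proposal is incomplete at exactly the step that carries the whole proposition: the non-negativity of $s$. The local existence and uniqueness argument, the explicit exponential representation for $B$, the inward-pointing field on the faces $\{X=0\}$ and $\{P=0\}$, and the decreasing functional $L=X+s+\tfrac{1}{Y_{B/s}}B$ for the global extension are all correct (that functional is similar in spirit to the quantity $Z$ the paper introduces later in the proof of Theorem~\ref{pr1}, and the paper's own proof of Proposition~\ref{p0} does not even address global extension explicitly, so this part of your write-up is, if anything, more careful). But for $s$ you only announce a strategy --- ``a Gronwall-type lower bound'', ``a barrier argument'' --- without carrying it out. Since the proposition is precisely the claim that $s(t)\ge 0$, this is a genuine gap and not a detail to be filled in later.

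For comparison, the paper handles this step by bounding $\tfrac{ds}{dt}\ge -\tfrac{1}{Y_{B/s}}(\mu(s)+m_s)B$ (using $X\ge 0$ and $Y_{B/s}<1$), introducing the auxiliary problem $\tfrac{dS}{dt}=-\tfrac{1}{Y_{B/s}}(\mu(s)+m_s)B$, $S(0)=s_0$, asserting that $S$ cannot reach $0$ in finite time ``by the same argument as before'', and concluding $s\ge S\ge 0$ from the comparison theorem. You should note, though, that your diagnosis of the difficulty is sharper than that treatment: at the corner $X=0$, $s=0$, $B>0$ one computes $\tfrac{ds}{dt}=-m_sB<0$ because $\mu(0)=0$ and $m_s>0$, so the orthant is not invariant there, and the auxiliary equation's right-hand side does not vanish at $S=0$, so $S\equiv 0$ is not a solution and the uniqueness-of-trajectories argument that works for $B$ does not transfer to $S$. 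In other words, the step you flagged as the real subtlety is indeed the weak point of the whole statement; to turn your sketch into a proof you would have to actually construct the lower barrier for $s$ (for instance by exploiting the inflow $K_HX\ge K_HX_0e^{-K_Ht}$ when $X_0>0$) or restrict the admissible initial data, neither of which your proposal does.
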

%
%
%\noindent{\bf Proof}:{}

%_______________
{\begin{proof}
The existence and uniqueness of the solution to system \eqref{model s1} are guaranteed by the Cauchy-Lipschitz theorem, since all functions involved in this system are globally Lipschitzian (see \cite{walter}). {assuming $\mu(\cdot)$ is bounded and $C^1$, Hypothesis 1}

  To show the positivity of the solution of system \eqref{model s1}, let $X_0\geq 0$, $, B_0\geq 0$, $s_0\geq 0$, and $P_0\geq 0$ be given. 
We proceed by step
 and show,  first, that $B(t) \geq 0$, then we deduce that $X(t) \geq 0$,  and then finally  the positive result for $s(t)$ and $P(t)$.
\begin{enumerate}
\item[\*1/]
{The explicit solution of $B(t)$ is
\[
B(t) = B(0) \, \exp \left( \int_0^t \big( \mu(s(\tau)) - k_d \big) \, d\tau \right).
\]
Since $B(0) \ge 0$ and the exponential is always positive, it follows that $B(t) \ge 0$ for all $t \ge 0$.}
 
 \item[2/] 

Using the positivity of $B(\cdot)$, any solution of the first equation, with $(X_0,B_0,s_0,P_0)$ in $\r_{+}^{3}$ satisfies 
    $ \frac{dX(t)}{dt} \geq -K_{H} X(t)$ for all $t >0$. We deduce, by integration, that 
 $X(t)\geq X_0  e^{-K_Ht}$, for all $t >0$.
Therefore, for any vector $(X_0,B_0,s_0,P_0)$, with non-negative components, we have
 $ X(t)\geq 0$, for all $ t >0$.
\item[3/]
\textbf{4. Positivity of $s(t)$ via Hypothesis 3.} \\
The substrate equation reads
\[
\frac{ds}{dt} = - \Big(\frac{1}{Y_{B/s}} \mu(s) + m_s \Big) B + K_H X.
\]
By \textbf{Hypothesis 3}, $K_H X(t)$ is sufficiently large relative to $\mu(s)$, $m_s$, and $B(t)$,
which immediately gives
\[
\frac{ds}{dt} \ge 0 \quad \Rightarrow \quad s(t) \ge s_0 \ge 0, \quad \forall t \ge 0.
\]
$s(t)$ is also non-negative for all $t >0$, and cannot reach 0 in finite time, whatever $(X_0,B_0,s_0,P_0)$ in $\r_{+}^{3}$.
\item[4/]   By  {\bf Hypothesis 1},  {\bf Hypothesis 2}  and   the positivity of $B(\cdot)$,   all the coefficients of the last equation of Eq. \eqref{model s1} are non-negative.  We deduce that $\di\frac{dP}{dt} $  is non-negative and therefore $P(t)$ is increasing. Therefore, based on the fact that $P(0)$ is in $\r_+$, $P(t)$ is also non-negative for all $t >0$. \end{enumerate}
\end{proof}}
%_______________

An equilibrium point of the non-linear system \eqref{model s1}, $E = (X^*, B^*, s^*,P^*)$, is a solution of the system
\begin{equation}\label{eqsHomogene}
   A(V)V=0 ,
\end{equation}
 where $A$ and $V$ are defined in Eq.\eqref{AetX}.

From the second equation of the system \eqref{eqsHomogene},
we derive either $B=0$ or $\mu (s)=k_d$.
\begin{itemize}
    \item[\checkmark] If $B =0$ then by the   first equation, we have $X=0$ whatever $s\in \r$, and the last two equations are automatically verified for any $s,P\in\r$. The vector $E=(0,0,s,P)$, for $s,P\in \r$ is then an equilibrium point.
    \item[\checkmark] If $B\neq 0$, 
    the  the fourth equation of Eq.\eqref{eqsHomogene} gives
    $
    \frac{k_d}{Y_{p/s}}=-m_p,
    $
    which is not possible because $m_p>0$ and $\frac{k_d}{Y_{p/s}}\geq 0$.
\end{itemize}
So, we have a continuum of equilibrium points given by $E=(0, 0, s, P)$, for $s$ and $P$ in $\r$, whatever the function $\mu$.
\\ 

Usually,   using the linearisation method, the stability of  equilibria of  ODEs is determined, by the sign of real part of eigenvalues of the Jacobian matrix.  For a point $E=(0,0,s^*,P^*)$ these eigenvalues are given by 
\begin{align}
    \lambda_1=-K_H,\qquad \lambda_2=\mu(s^*)-k_d,\qquad \lambda_3=0, \qquad \lambda_4=0.
\end{align}

  We cannot conclude on the stability or instability,  because  these equilibrium points are not hyperbolic (the eigenvalues $\lambda_3$ and $\lambda_4$ are zero). In
such situations, one could use the Lyapunov function (see \cite{walter}, page 319),
but it is generally difficult to find  such a function, especially for a complex non-linear system. We show in the next section, using Barbalat's lemma (see \cite{barbalat}), that the solutions of the system tend to an equilibrium point when $t$ tends to $+\infty$, which makes it a globally and asymptotically stable equilibrium.

Note that $\lambda_2\leq 0$ if and only if $\mu(s^*)\leq K_d$. In the study of basins of attraction, we will introduce the set
 of the values of $s$ such $\mu(s)\leq K_d$ defined by
\begin{align}
   \s:=\{s  \in \r_+ : \mu(s)\leq k_d\}.
\end{align}
Under {\bf Hypothesis 1} and {\bf Hypothesis 2}, this set has a non-empty interior.
For the Monod  law, we have
\begin{align}\label{DefS}
   \s=[0,\lambda] \>\>\>\>\>\>  \mbox{with}\>\>\>\>\>\> \lambda=\frac{k_d k_s}{\mu_{max}-k_d}.
\end{align}

%__________________________________
\subsection{Asymptotic behaviour}
%----------------------------------%
In this section, we describe the asymptotic behavior of the system solution \eqref{model s1} and   deduce the stability of the equilibrium points. 
We begin by showing that when time goes to infinity, the solution tends to a point of the form $(0, 0, s^*, P^*)$ for some values $s^*$ and $P^*$ which depend on the parameters of the problem and the initial conditions.

\begin{theorem}\label{pr1}
Let $(X_0,B_0,s_0, P_0)$ be a vector with non-negative components, and suppose  {\bf{ Hypothesis 1}} and {\bf{ Hypothesis 2}} are satisfied.
Then the solutions of the system \eqref{model s1} with the initial condition
$(X(0), B(0), s(0), P(0)) = (X_0, B_0, s_0, P_0) $
converge asymptotically to an equilibrium point $(0, 0, s^*, P^*)$ such that
\begin{eqnarray}
s^{\star}&\leq& s_{0}+\frac{\left(1+\di\frac{m_s Y_{B/s} }{k_d}\right)X_{0}+\alpha B_0}{\alpha Y_{B/s}}\label{s00}\\
P^{*}&=&P_0+a B_0 +b (X_0+s_0-s^*).\label{pcv}
\end{eqnarray}
The coefficients $a$ and $b$ are positive numbers given by the following expressions:
\begin{eqnarray*}
    a&=&\frac{Y_{P/s} m_P+k_d Y_{B/s}\left(\alpha-\frac{m_s}{k_d}\right)}{k_d Y_{P/s}\left[1-Y_{B/s}\left(\alpha-\frac{m_s}{k_d}\right)\right]}\\
    b&=&\frac{Y_{B/s}\left(Y_{P/s} m_P+k_d\right) }{k_d Y_{P/s}\left[1-Y_{B/s}\left(\alpha-\frac{m_s}{k_d}\right)\right]}.
\end{eqnarray*}
Moreover, when $X_0$ or $s_0$ is non-zero, we have $s^{\star}>0$ and $s^{\star}$  belongs to $\s$, where $\s$ is defined by Eq. \eqref{DefS}.
\end{theorem}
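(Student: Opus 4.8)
The plan is to exhibit a Lyapunov-type functional that is monotone along trajectories, use it together with Barbalat's lemma to force $B\to 0$, and then recover the limits of $X,s,P$ together with the announced formulas by integrating the balance laws of \eqref{model s1} over $[0,+\infty)$.

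First I would introduce the weighted total mass and the constant
\[
W(t)=X(t)+\frac{1}{Y_{B/s}}\,B(t)+s(t),\qquad \gamma:=\frac{k_d}{Y_{B/s}}+m_s-\alpha k_d .
\]
Adding the first three equations of \eqref{model s1} with the weights $1,\tfrac{1}{Y_{B/s}},1$ makes the $\mu(s)B$ terms cancel and the two $K_HX$ terms cancel, leaving $\dot W=-\gamma B$. By {\bf Hypothesis 2} (here $\alpha<1$, $Y_{B/s}<1$, $m_s>0$) one has $\gamma>0$, so $W$ is non-increasing; since $X,B,s\ge 0$ by Proposition~\ref{p0}, $W$ is bounded below by $0$. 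Hence $W$ converges to some $W_\infty\ge 0$, the whole trajectory is bounded, and from $\int_0^{t}\gamma B=W(0)-W(t)$ we get $\int_0^{+\infty}B(\tau)\,d\tau=:I<+\infty$. Next I would apply Barbalat's lemma: as $s,B$ are bounded and $\mu$ is continuous, $\dot B=(\mu(s)-k_d)B$ is bounded, so $B$ is uniformly continuous, and with $B\ge 0$, $\int_0^{+\infty}B<+\infty$ this gives $B(t)\to 0$. Feeding this into $\dot X=-K_HX+\alpha k_dB$ and using $X\ge0$ yields $X(t)\to 0$ (for $B<\varepsilon$ one gets $\limsup X\le \alpha k_d\varepsilon/K_H$), whence $s(t)=W(t)-X(t)-\tfrac{1}{Y_{B/s}}B(t)\to W_\infty=:s^*$; finally $P$ is non-decreasing and bounded, so $P(t)\to P^*$. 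This already establishes convergence to an equilibrium $(0,0,s^*,P^*)$.

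The explicit formulas then come from integrating over $[0,+\infty)$, using $X(\infty)=B(\infty)=0$ and the finiteness of $I$. Integrating the second equation gives $\int_0^{\infty}\mu(s)B=k_dI-B_0$; integrating the first gives $K_H\int_0^{\infty}X=X_0+\alpha k_dI$; and $W(0)-W_\infty=\gamma I$ gives $s^*=X_0+\tfrac{1}{Y_{B/s}}B_0+s_0-\gamma I$. For \eqref{s00} I would substitute $I=(K_H\int_0^{\infty}X-X_0)/(\alpha k_d)$ into the integrated third equation; the coefficient of the nonnegative quantity $K_H\int_0^{\infty}X$ equals $1-\tfrac{1}{\alpha Y_{B/s}}-\tfrac{m_s}{\alpha k_d}<0$ (since $\alpha Y_{B/s}<1$), so discarding that term raises the right-hand side and produces exactly the bound \eqref{s00}. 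For \eqref{pcv}, integrating the fourth equation gives $P^*-P_0=\tfrac{1}{Y_{P/s}}(k_dI-B_0)+m_PI=\bigl(\tfrac{k_d}{Y_{P/s}}+m_P\bigr)I-\tfrac{B_0}{Y_{P/s}}$; inserting $\gamma I=X_0+\tfrac{1}{Y_{B/s}}B_0+s_0-s^*$ and simplifying reproduces the stated $a,b$, whose positivity is read off from {\bf Hypothesis 2}.

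Finally, for the location of $s^*$: taking $B_0>0$ (so $B(t)>0$ for all $t$ by Proposition~\ref{p0}), I integrate $\dot B/B=\mu(s)-k_d$. Since $B(t)\to 0$ we have $\int_0^{\infty}(\mu(s)-k_d)=-\infty$, and because $\mu(s(t))\to\mu(s^*)$ this forces $\mu(s^*)\le k_d$, i.e. $s^*\in\s$ by \eqref{DefS}. The strict positivity $s^*>0$ when $X_0$ or $s_0$ is nonzero reflects the hydrolysis source $K_HX$: dead biomass recycled into $X$ keeps replenishing the substrate, so it cannot be fully depleted once some organic matter or substrate is initially present. I expect the two delicate points to be the clean application of Barbalat's lemma (securing uniform continuity and hence $B\to 0$, on which everything rests) and this strict positivity $s^*>0$, which needs a genuine lower bound on $s$ near the limit rather than a one-line estimate.
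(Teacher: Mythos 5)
Your overall strategy is the same as the paper's: a linear functional of $(X,B,s)$ that is monotone along trajectories, Barbalat's lemma to force $X\to0$ and $B\to0$, and integration of the four balance laws over $[0,+\infty)$ to produce \eqref{s00} and \eqref{pcv}. The difference lies in the choice of functional. The paper uses $Z=\bigl(1+\frac{m_sY_{B/s}}{k_d}\bigr)X+\alpha B+\alpha Y_{B/s}s$, whose derivative is a negative multiple of $X$; it therefore gets $X\to0$ first (Barbalat applied to $\dot Z$, which requires bounding $\ddot Z$) and then $B\to0$ from a second application of Barbalat to $\dot X$. Your $W=X+\frac1{Y_{B/s}}B+s$ has $\dot W=-\gamma B$, which hands you $\int_0^\infty B\,d\tau<+\infty$ immediately, gives $B\to0$ with a single application of Barbalat, and yields $X\to0$ by an elementary comparison on $\dot X=-K_HX+\alpha k_dB$. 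This is slightly more economical, and the integrability of $B$ --- which is what the limit of $P$ and the formulas actually need, and which the paper obtains ``by cascade'' only afterwards --- comes for free. I checked the algebra: your bound for $s^*$ and your expressions for the coefficients of $B_0$ and $X_0+s_0-s^*$ reduce exactly to \eqref{s00} and to the stated $a$ and $b$. Your argument that $\mu(s^*)\le k_d$, via $\ln B(t)=\ln B_0+\int_0^t(\mu(s(\tau))-k_d)\,d\tau\to-\infty$, is essentially the paper's contradiction argument (both require $B_0>0$, which the paper also tacitly assumes in that step).

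The one genuine gap is the claim $s^*>0$ when $X_0$ or $s_0$ is nonzero: you offer only the heuristic that hydrolysis keeps replenishing the substrate, and you yourself flag that a real lower bound is missing. The paper closes this by contradiction: if $s(t)\to0$ then $\mu(s(t))\to0$, so beyond some time $T$ one has $\mu(s(t))\le Y_{B/s}(\alpha k_d-m_s)$; since $\frac{d}{dt}(X+s)=\frac1{Y_{B/s}}\bigl(Y_{B/s}(\alpha k_d-m_s)-\mu(s)\bigr)B$, the quantity $X+s$ is nondecreasing on $[T,\infty)$ and stays above $X(T)+s(T)>0$, contradicting $X(t)+s(t)\to0$. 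You would need to supply this (or an equivalent) step. Note also that this argument tacitly requires $\alpha k_d>m_s$, a condition not contained in \textbf{Hypothesis 2}; the same caveat applies to the positivity of $a$, whose numerator contains $Y_{B/s}(\alpha k_d-m_s)$, so your remark that $a>0$ is ``read off from Hypothesis 2'' is too quick --- though the paper asserts the same thing and shares this weakness.
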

%  
%\noindent{\bf Proof:}
\begin{proof}
Consider the  
function $Z(t)$ defined by
\begin{align}\label{Z}
    Z(t)= \left(1+\frac{m_sY_{B/s} }{k_d}\right)X(t)+\alpha B(t)+\alpha Y_{B/s} s(t).
\end{align}
This function satisfies some properties that are essential for the rest of the proof. We start by mentioning them before evaluating the limits at infinity of the components of the solution of the system \eqref{model s1}.
\begin{itemize}
    \item $Z(t)\geq 0$, for all $t\geq 0$.
    \item The derivative of $Z(t) $ is given by
\begin{equation}\label{dZ}
    \frac{dZ}{dt}=K_H \left[\left(\alpha Y_{B/s} -1\right)-\frac{m_sY_{B/s} }{k_d}\right]X,
\end{equation}
%\item 
which makes it possible to write $Z(t)$ also in the form
\begin{equation}\label{Zt2}
    Z(t)=Z(0)+K_H \left[\left(\alpha Y_{B/s} -1\right)-\frac{m_sY_{B/s} }{k_d}\right]\int^{t}_{0}X(\tau)d\tau
\end{equation}
\item There exists $0\leq Z\infty <+\infty$ such that
\begin{equation}\label{LimZt}
    \lim_{t \to +\infty}Z(t) := Z\infty, 
\end{equation}
indeed $Z(t)$ is decreasing (by {\bf Hypothesis 2}, 
$\left(\alpha Y_{B/s} -1\right) -\dfrac{m_sY_{B/s} }{k_d})<0$) and minorized by zezo. 
\item The second derivative of $Z(t)$ is given by
\begin{equation}\label{DerveSecond}
    \frac{d^{2}Z}{dt^{2}}=K_H \left((\alpha Y_{B/s} -1)-\frac{m_sY_{B/s} }{k_d}\right)
    \left(-K_H X+\alpha k_d B\right).
\end{equation}
\end{itemize}
\begin{enumerate}
    \item We begin by showing that the solution of Eq. \eqref{model s1}
    converge asymptotically to an equilibrium point. \\ % $(0,0,s^*,P^*)$.\\
    
\noindent\underline{\sc Limits of $X(t)$ and $B(t)$}\\
With respect to the definition of $Z(t)$, the limit \eqref{LimZt} implies that the variables $X(t)$, $B(t)$ and $s(t)$ are bounded (since they are
non-negative).  We deduce that the
second derivative \eqref{DerveSecond} is bounded, therefore $ \frac{dZ}{dt}$ is uniformly
continuous on $\r^+$. Barbalat's Lemma see \cite{barbalat} allows us to assert that 
%begin{align}
  $  \underset{t \rightarrow +\infty} {\lim} \frac{dZ}{dt}=0$.
%end{align}
By Eq. \eqref{dZ}, we obtain
  $\lim_{t\to+\infty} X(t)=0$. \\ %, which gives the first part of \eqref{x00}.\\
Similarly, $\frac{d^{2}X}{dt^{2}}$, which is expressed as a function of $X(t)$ and $B(t)$, is also bounded, and then $\frac{dX}{dt}$ is uniformly continuous and by Barbalat's lemma, we deduce
%begin{align}
  $   \underset{t \rightarrow +\infty} {\lim} \frac{dX}{dt}=0$,
%end{align}
and therefore, by the first equation of the system and knowing that $\di \lim_{t\to+\infty} X(t)=0$, we have
%begin{align}
  $  \underset{t \rightarrow +\infty} {\lim} B(t)=0$. 
  \\
  
 % 
%end{align}
\noindent\underline{\sc Limit of $s(t)$}\\
By Eqs. \eqref{Z}, and \eqref{LimZt}, and as we have already shown that $\di\lim_{t\to+\infty}X(t)=\lim_{t\to+\infty} B(t)=0$, we can affirm that there exists a positive real $s^{\star}$ such that
\begin{equation}
   \lim_{t \to +\infty} s(t)=s^{\star}=\frac{Z_{\infty}}{\alpha Y_{B/s}}.
\end{equation}
The function $Z$ being decreasing, we have $Z_{\infty} \leq Z(0)$, and then $s^{\star}\leq \di\frac{Z(0)}{\alpha Y_{B/s}}$.\\
As $Z(0) =\left(1+\di\frac{m_s Y_{B/s} }{k_d}\right)X_{0}+\alpha B_{0}+\alpha Y_{B/s}s_0$, we get Eq. \eqref{s00}.\\

\noindent\underline{\sc Limit of $P(t)$}\\
Consider $Z(t)$ written in the form of Equation \eqref{Zt2}.  The function $Z (.)$ is bounded, we deduce that it is the same for $\int^{t}_{0}X(\tau)d\tau$, then using the integration of the system \eqref{model s1} between $0$ and $t$ for $t>0$ gives 
\begin{eqnarray*} 
X(t) &=&  X(0)-K_{H}\int_0^tX(\tau)d\tau + \alpha\, k_{d}\int_0^tB(\tau)d\tau\\
    B(t) &=& B(0) +\int_0^t\mu(s(\tau))B(\tau)d\tau - k_{d}\int_0^tB(\tau)d\tau\\
  s(t)	&=& s(0) - \di\frac{1}{Y_{B/s}}\int_0^t\mu(s(\tau))B(\tau)d\tau   - m_{s}\int_0^tB(\tau)d\tau  + K_{H}\int_0^tX(\tau)d\tau \\
  P(t)&=&P(0)+\int_0^t \frac1{Y_{P/s}}\mu(s(\tau))B(\tau)d\tau+m_P\int_0^t B(\tau)d\tau.
\end{eqnarray*}
We deduce by cascade that 
 $\di \int^{t}_{0}B(\tau)d\tau < +\infty$,  $\di    \int^{t}_{0} \mu(s(\tau))B(\tau)d\tau<+\infty$  and that $\di\lim_{t\to+\infty}P(t)$ is finite and positive as a limit of a positive and bounded increasing function.
%%%%%%%%%%%%%%%%%%%%
 
%
Taking the limit in the last expressions of $X(t)$, $B(t)$ and $P(t)$, when $t$ tends to $+\infty$, and % knowing that we have 
having $\di\lim_{t\to+\infty} X(t)=\di\lim_{t\to+\infty} B(t)=0$ and Eq. \eqref{s00} gives 
\begin{eqnarray}\label{e0}
%\left\{ \begin{array}{rcll}  
  0&=& X_0-K_H\int^{+\infty}_{0}X(\tau)d\tau+\alpha k_d \int^{+\infty}_{0} B(\tau)d\tau \nonumber \\
% &&\\
0 &=& B_0-k_d\int^{+\infty}_{0}B(\tau)d\tau+\int^{+\infty}_{0} \mu(s(\tau))B(\tau)d\tau \nonumber\\
s^*	&=& s_0 - \di\frac{1}{Y_{B/s}}\int_0^{+\infty}\mu(s(\tau))B(\tau)d\tau  \nonumber\\
&&\qquad\qquad\qquad- m_{s}\int_0^{+\infty}B(\tau)d\tau  + K_{H}\int_0^{+\infty}X(\tau)d\tau\nonumber \\
\lim_{t\to+\infty}P(t)&=&P_0+\int_0^{+\infty} \frac1{Y_{P/s}}\mu(s(\tau))B(\tau)d\tau+m_P\int_0^{+\infty} B(\tau)d.\tau\label{limPinfini}
    \end{eqnarray} 

Linear combinations of the first three equations give

\begin{equation}\label{intb}
   \int^{t}_{0}B(\tau)d\tau=\frac{B_0+ Y_{B/s}\left[(X_0+s_0-s^*)\right]}{k_d\left[1-Y_{B/s}(\alpha-\frac{m_s}{k_d})\right]} 
\end{equation}
\begin{equation}\label{intmubb}
  \int^{t}_{0} \mu(s(t))B(\tau)d\tau=\frac{Y_{B/s}\left[B_0(\alpha-\frac{m_s}{k_d})+(X_0+s_0-s^*)\right]}{\left[1-Y_{B/s}(\alpha-\frac{m_s}{k_d})\right]}  
\end{equation}
By replacing the expressions \eqref{intb} and \eqref{intmubb} in Eq. \eqref{limPinfini}, we obtain the expression \eqref{pcv}. Note that by {\bf Hypothesis 2}, $\left[1-Y_{B/s}(\alpha-\dfrac{m_s}{k_d})\right]$ is strictly positive.\\

\item Let us now show that $s^{\star}$ cannot be equal to 0 when $X_0$ or $s_0$ are not zero. Suppose that $X_0\neq0 $ or $s_0\neq0 $ and that $s(t)$ tends to 0 when t tends to infinity. Then by continuity, the function $\mu(s(t))$ will tend towards zero too, and there would exist $T$
such that for all $t\geq T$ we have
\begin{equation}\label{inimu2}
    \mu(s(t))\leq Y_{B/s}\alpha k_d- Y_{B/s} m_s.
    \end{equation}
The sum of the first and third equations of the system \eqref{model s1} allows us to write
\begin{equation}
    \frac{d}{dt}(X(t)+s(t))=\frac{1}{Y_{B/s}}(Y_{B/s}\alpha k_d- Y_{B/s} m_s -\mu(s))B.
\end{equation}
By \eqref{inimu2} we conclude that 
%begin{align}
  $\dfrac{d}{dt}(X(t)+s(t))\geq 0$ for all $ t>T.$ 
%$end{align}
 Using the same argument as the one used in the demonstration of the positivity of the solution,   we conclude that if $X(0)+s(0)>0$, the variable $X(\cdot)$
or $s(\cdot)$ cannot reach 0 in finite time. We  then have
%begin{align*}
   $X(t)+s(t)\geq X(T)+s(T)>0$ for all $t>T$, 
which is a contradiction to the fact that $X(t)+s(t)$ tends to $0$ when $t$ tends to $+\infty$.\\

\item We prove now that $s^*$ belongs to $\s$.
%\begin{pro}\label{pr2}\\
%For an initial condition %$(x_0,s_0,B_0,P_0)$ in %$\r_{+}^{4}$ with $x_0 > 0$ and %$B_0 > 0$, and under
%{\bf Hypothesis 1} and {\bf Hypothesis 2}, the solution of \eqref{model s1} converges asymptotically to an equilibrium $(0,0,s^*,P^{*})$ where $s^* > 0$ belongs to $\s$ .
%\end{pro}
%{\bf Proof:}
Let $(X_0,s_0,B_0,P_0)$ be in $\r_{+}^{4}$ with $X_0 > 0$ and $B_0 >0$.  Suppose that 
$s^* $, defined by Equation  \eqref{s00}, does not belong to $\s$. Then there would exist $\tilde{T} > 0$
such that for all $t>\tilde{T}$ we would have :
  $ \mu(s(t))-k_d>\eta:=\frac{\mu(s^*)-k_d}{2}.$
By the system \eqref{model s1}, we would then have
$\frac{dB(t)}{dt} > \eta B(t).$ Therefore, we would have $
    B(t)>B(s)\exp^{\eta(t-s)} $ for all $t,s > \tilde{T}, \> \text{such that} \> \> t\geq s,$ and then $B(.)$ cannot converge asymptotically to $0$, which is incompatible with  the previous results .
    \end{enumerate}
    \end{proof}

%________________________________________________
\subsection{Properties of the equilibrium points}
%--------------------------------------------------
As we have already pointed out,  the equilibria of the system \eqref{model s1} are not hyperbolic. 
We cannot conclude on their stability by directly using the linearization technique and the central variety theorem (see \cite{PERCO}). However,  using a suitable variable change, we prove the existence of an invariant stable variety in the following theorem.

%\begin{theo}
%
\begin{theorem} \label{pr3}

Assume that {\bf Assumption 1} and {\bf Assumption 2} are satisfied. For each stationary     point $ E = (0,0,s^*,P^*)$ with $s^*$ in  $\mbox{int } \s$, %.\\Then 
there exists a  stable two-dimensional invariant variety $\mathcal{M}$ in $\r_{+}^{4}$ such that any solution of \eqref{model s1} with the initial condition in $\mathcal{M}$ converges asymptotically to E.
\end{theorem}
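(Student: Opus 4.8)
The plan is to translate $E$ to the origin, read off the linear part, and then reduce the statement to an application of invariant-manifold theory for a non-hyperbolic rest point. First I would set $\tilde X=X$, $\tilde B=B$, $\tilde s=s-s^{*}$, $\tilde P=P-P^{*}$, so that $E$ becomes the origin, and compute the Jacobian $J$ of \eqref{model s1} at $E$. Using $B^{*}=0$ together with {\bf Hypothesis 1}, every entry proportional to $B$ or to $\mu'(s)B$ drops out, and a short expansion gives the characteristic polynomial $\lambda^{2}\,(\lambda+K_H)\,\bigl(\lambda-(\mu(s^{*})-k_d)\bigr)$. Hence $J$ has the two negative eigenvalues $\lambda_1=-K_H$ and $\lambda_2=\mu(s^{*})-k_d$ — the latter strictly negative precisely because $s^{*}\in\operatorname{int}\mathcal S$ forces $\mu(s^{*})<k_d$ — together with the double eigenvalue $0$ whose eigenspace is $\operatorname{span}(e_3,e_4)$, i.e. exactly the tangent space to the continuum of equilibria $(0,0,s,P)$. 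The decisive structural fact is that no eigenvalue has positive real part.

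Next I would introduce the linear change of variable $W=T^{-1}\tilde V$, where the columns of $T$ are the (generalized) eigenvectors of $J$: the two vectors spanning the stable subspace $E^{s}$ associated with $\lambda_1,\lambda_2$, and $e_3,e_4$ spanning the center subspace $E^{c}$. In these coordinates the system acquires the block form
\begin{equation*}
\dot u = J_s\, u + f(u,w),\qquad \dot w = g(u,w),
\end{equation*}
with $u\in\mathbb R^{2}$, $w\in\mathbb R^{2}$, where $J_s$ is Hurwitz (its eigenvalues are $-K_H$ and $\lambda_2$) and $f,g$ are $C^{1}$ and vanish at the origin together with their first derivatives — the nonlinearity enters only through the factor $\mu(s)-\mu(s^{*})$ multiplying $B$, which is quadratic in the shifted variables. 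This is the \emph{suitable change of variable} announced before the statement: it cleanly separates the two contracting directions from the two neutral ones.

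With the spectrum split into a two-dimensional stable part, a two-dimensional center part, and no unstable part, I would then invoke the local stable-manifold theorem in its non-hyperbolic (stable/center/unstable) form, which underlies the invariant-manifold theory of \cite{PERCO}. It furnishes a unique $C^{1}$ locally invariant manifold $\mathcal M$ whose dimension equals the number of stable eigenvalues, namely $2$, which is tangent at $E$ to $E^{s}$ and is characterized by the property that every solution starting on it remains on it and converges to $E$ exponentially, with rate governed by $\max(-K_H,\lambda_2)<0$. Pulling $\mathcal M$ back through $T$ and the translation yields the desired two-dimensional invariant variety through $E=(0,0,s^{*},P^{*})$. To land in the biologically relevant cone I would finally use that $\mathbb R_{+}^{4}$ is forward invariant by Proposition \ref{p0}; since $\mathcal M$ is invariant, $\mathcal M\cap\mathbb R_{+}^{4}$ carries the forward orbits converging to $E$, which is the asserted variety \emph{in} $\mathbb R_{+}^{4}$.

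The main obstacle is exactly the non-hyperbolicity: the classical stable-manifold theorem does not apply, so the argument hinges on correctly isolating the Hurwitz block by the change of variable and then quoting the version of the theorem valid in the presence of a center direction. The more delicate point is to guarantee that solutions on $\mathcal M$ converge to the specific point $E$ rather than drifting along the center continuum toward a neighboring equilibrium $(0,0,s',P')$; this is precisely what the exponential contraction inside the stable fiber ensures, since any nonzero component along the neutral directions would prevent the exponential decay established above. Verifying that $f,g$ genuinely vanish to second order, and that the slower stable rate $\max(-K_H,\lambda_2)$ still dominates the zero center rate, is the routine computation that makes this last point rigorous.
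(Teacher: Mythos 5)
Your argument is correct, but it follows a genuinely different route from the paper's. You keep the original coordinates (translation plus a linear conjugation), observe that the Jacobian at $E$ has spectrum $\{-K_H,\ \mu(s^*)-k_d,\ 0,\ 0\}$ with no unstable part and with the kernel equal to the tangent plane of the continuum of equilibria, and then invoke the stable-manifold theorem for a non-hyperbolic rest point (the stable/center splitting version) to obtain the two-dimensional strong stable manifold tangent to $E^s$, on which orbits converge exponentially to $E$ itself rather than drifting along the line of equilibria. The paper instead removes the degeneracy before linearizing: it performs the \emph{nonlinear} changes of variables $z=\bigl(X+(s-s^*)\bigr)/B+\varphi$ and $W=(P-P^*)/B+\omega$ (defined only on trajectories with $B>0$), after which the relevant equilibrium $(0,0,0,0)$ of the transformed system \eqref{xbzw} is hyperbolic, with a double positive eigenvalue $k_d-\mu(s^*)$ and two negative eigenvalues $-K_H$ and $\mu(s^*)-k_d$; the classical hyperbolic stable-manifold theorem then yields a two-dimensional stable manifold which is pulled back to $\mathcal{M}$. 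So your identification of your linear change of basis with ``the suitable change of variable'' announced in the paper is a mis-attribution --- the paper's change of variable is nonlinear and singular on $\{B=0\}$ --- though this does not affect your proof. What each approach buys: yours is more economical (no division by $B$, no need to restrict to $B>0$, and it exposes that the center subspace is exactly the equilibrium plane, which would even allow a reduction-principle description of all nearby orbits), but it must quote the stronger pseudo-hyperbolic version of the invariant-manifold theorem and, as you note, must argue carefully that orbits on $\mathcal{M}$ converge to the specific point $E$; the paper's route needs only the textbook hyperbolic theorem but pays for it with the blow-up-type coordinates and the final ``equivalence'' step transporting stability back to the original variables. Both proofs share the same residual imprecision about why the local manifold meets $\mathbb{R}_{+}^{4}$ in a genuinely two-dimensional set, given that $E$ lies on the boundary $\{X=B=0\}$; if you want to tighten your version, compute the stable eigenvectors (e.g.\ $(1,0,-1,0)$ for $-K_H$) and check that a cone of stable directions points into the positive orthant.
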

%\end{theo}
%\noindent{\bf Proof:}
\begin{proof}
 The proof of this theorem is based on two variables changes, the first is for the variable $s$, and the second is for the variable $P$. We obtain
a new system associated with the system \eqref{model s1} and whose equilibria are now hyperbolic.
To define 
the variable change for $s$,  we fix $s^* > 0$ and $P^*>0$ so that $\mu(s^*) < k_d$. 
Consider the solutions of \eqref{model s1} with $B(t)> 0$, 
 for $t \in [0,+\infty)$, and let
\begin{equation}\label{0}
    z(t)=\dfrac{X(t)+(s(t)-s^*)}{B(t)}+\varphi,\quad 
\text{with} \quad 
    \varphi:=\dfrac{\dfrac{\mu(s^*)}{Y_{B/s}}-(\alpha k_d -m_s)}{\mu(s^*)-k_d},
\end{equation}
  which is equivalent to
\begin{equation}\label{d}
    s(t)=s^*-X(t)+(z(t)-\varphi)B(t).
\end{equation}
The function $z(t)$ satisfies the equation (see the Appendix,  page
\pageref{page: EquationZ}, for more details% in {\sc Calculus 1} in 
)
\begin{equation}\label{c}
     \frac{dz}{dt} =-\gamma(\mu(s)-\mu(s^*)) -(\mu(s)-k_d)z
 \end{equation}
  with
   $\di  \gamma:=\left[\frac{k_d (1-\alpha Y_{B/s})+Y_{B/s} m_s}{Y_{B/s}(k_d-\mu(s^*))}\right] > 0$,  and using Eq. \eqref{d} we can write $\mu(s)$ in terms of the new variable and denote by $F(z,X,B)$ the following expression
   \begin{equation}\label{gzxb}
    F(z,X,B) := \mu\biggl(s^*-X+(z-\varphi)B\biggr)
\end{equation}
The second change of variable is defined by setting $P^* > 0$ and $s^* > 0$ such that $\mu(s^*) < k_d $. For the solutions of \eqref{model s1} with $B(t)> 0$ for $t \in [0,+\infty)$, we define
\begin{equation}\label{0P}
    W(t)=\frac{P(t)-P^*}{B(t)}+\omega,\quad 
\text{with} \quad 
    \omega:=\dfrac{\dfrac{-\mu(s^*)}{Y_{P/s}}-m_P}{\mu(s^*)-k_d},
\end{equation}
 which is equivalent to
\begin{equation}\label{dP}
    P(t)=P^*+(W(t)-\omega)B(t). 
\end{equation}
 The variable $W(t) $ satisfies the equation (see \ref{secA1}, page \pageref{page:Calcul2}, for more  details) 
 \begin{equation}\label{cP}
      \frac{dW}{dt} =\phi(\mu(s)-\mu(s^*)) -(\mu(s)-k_d)W,   
 \end{equation}
 with
   $\di  \phi:=\left[\frac{k_d +Y_{P/s} m_P}{Y_{P/s}(k_d-\mu(s^*))}\right] > 0$ .\\
The system \eqref{model s1} becomes with the variables $z$ and $W$, using Equation \eqref{gzxb}

 \begin{equation}\label{xbzw}
\left\{ 
    \begin{array}{rcll}
    \displaystyle
    \frac{dz}{dt} &=&  -\gamma \biggl(F(z,X,B) - \mu(s^* )\biggr)-\biggl(F(z,X,B) -k_d\biggr)z,
    \vspace{0.1cm}\\
  %  &&\\
    \displaystyle
   \frac{dX}{dt} &=& -K_H X+\alpha k_d B ,
   \vspace{0.1cm}\\
%   &&\\
   \displaystyle
  \frac{dB}{dt}	&=&  \biggl(F(z,X,B) -k_d\biggr) B,\vspace{0.1cm}\\
%  &&\\
  \displaystyle
\frac{dW}{dt} &=&  \phi \biggl(F(z,X,B) - \mu(s^* )\biggr)-\biggl(F(z,X,B) -k_d\biggr)w,\\
     \end{array} \right.
  \end{equation} 
     defined 
  in the domain     given  by
\begin{eqnarray*}
    \mathcal{D}&:=&\{(z,X,B,W) \in \r \times \r_{+} \times \r_{+}^{*}\times \r\quad\text{such that}\\
    &&\qquad s^*-X+(z-\varphi)B \geq 0  
    \quad\text{and}\quad P^*+(W-\omega)B \geq 0 \}.
\end{eqnarray*}

 By condition $s^* \in int \> \s $, we have  
$\mu(s^*)\neq k_d$, and we can check that $(0,0,0,0)$ is the only equilibrium of \eqref{xbzw} in $\mathcal{D}$.
 Indeed, if $E = (z^*, X^*, B^*,W^*)$ is an equilibrium point, from %of the non-linear system \eqref{xbzw}.
%From 
the $3^{th}$ equation, 
we derive that either $B=0$ or $F(z^*,X^*,B^*)=k_d$.
\begin{itemize}
 
\item If $F(z^*,X^*,B^*)=k_d$, from the equations $1^{th}$ and $4^{th}$ of the system \eqref{xbzw} we deduce   that $\mu (s^*)=k_d$ contradicts condition $s^* \in int \> \s$.
\item If $B =0$, by the equation $2^{th}$, we have $X=0$ whatever $s^* \in \r_+$. In this case, from the first equation we deduce that we necessarily have
  $z=0$ since we cannot have $\mu (s^*)=k_d$. 
  \item The same reasoning as used for the equation $4^{th}$ gives $W=0$. % or $\mu (s^*)=k_d$. Since $\mu(s^*)=k_d$, 
\end{itemize}
Thus, the vector $E=(0,0,0,0)$ is the only equilibrium point of the system \eqref{xbzw} in $\mathcal{D}$ and the Jacobian matrix associated %at the point $(0,0,0,0)$
is given by
\begin{align}\label{j0}
   \mathfrak{J}_{(0,0,0,0)}=\begin{pmatrix}
-(\mu(s^*)-k_d) & \gamma\mu^{'}(s^*) & \gamma \varphi\mu^{'}(s^*) & 0 \\ 0 &   -K_H & \alpha k_d & 0 \\ 0 &  0 &
\mu(s^*)  - k_{d} & 0 \\
0 & -\phi\mu^{'}(s^*) & -\phi \varphi\mu^{'}(s^*) &-(\mu(s^*)-k_d)   
\end{pmatrix}
\end{align}
It admits a double eigenvalue $r_1$ and two single eigenvalues $r_2$ and $r_3$:
\begin{equation}\label{vp}
     r_1=-(\mu(s^*)-k_d), \qquad
  r_2=-K_H, \qquad r_3=\mu(s^*) - k_{d}, %>r_4=-(\mu(s^*) - k_{d}).
\end{equation}
 all of them are real and non-zero.  According to the Central Variety Theorem 
\cite{Wiggins} page 35) and under the condition $\mu(s^*)< K_d$, the point $(0,0,0,0)$ is thus a hyperbolic equilibrium point with a stable two-dimensional variety $\mathcal{L}$ and an unstable two-dimensional variety $\mathcal{U}$ which are, respectively, positive and negative invariants. So, any trajectory $(z(.),X(.),B(.),W(.))$ in $\mathcal{D}$ of the system \eqref{xbzw} in the stable two-dimensional invariant variety $% \mathcal{M} := 
\mathcal{L} \cap \mathcal{D} $  converges asymptotically to $(0,0,0,0)$.

Finally, we conclude from the Invariance of Stability (see \cite{BOOK DIFF}, Proposition 6.6, page 283) that
by equivalence there exists a stable two-dimensional invariant variety $\mathcal{M}$ in $\r_{+}^{4}$ such that any solution of the system \eqref{model s1} with the initial condition in $\mathcal{M}$ converges asymptotically to $(0,0,s^*, P^*)$.
\end{proof}

%_________________________
\section{ Numerical tests}
%-------------------------
%-------------------------
 This section is dedicated to different numerical tests. As a validation of our model, we compare the results obtained by our system  \eqref{model s1} with those obtained in  \cite{ma 2013} and \cite{p2006p}.  After that, we present numerical tests that confirm the theoretical results obtained in the previous sections.
The system \eqref{model s1} is autonomous. To achieve good accuracy, we choose the Runge-Kutta method of order 4 (RK4) and the Matlab computer software package to solve it.\\

\noindent\underline{\bf Validation tests}

\noindent For the first test, 
we  consider the data of Table \ref{t1} and Table \ref{t2} given in reference \cite{ma 2013} for the first table and references \cite{p2006p} and \cite{saleh} for the second.
\begin{figure}[h]
   \begin{center}
      % \begin{minipage}[c]{.80\linewidth}
    \includegraphics[width=1.9in] %[height= 4cm]
      {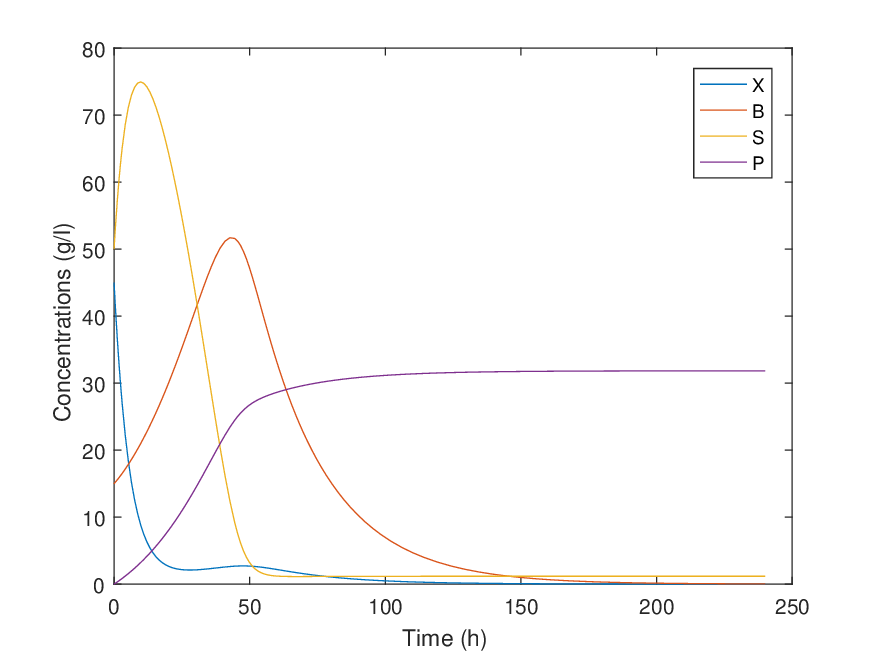}
    \caption{Evolution of $X$, $B$, $s$ and $P$, using  the parameters of Table \ref{t1}, Table \ref{t2} and the initial conditions $(X_0,B_0,s_0,P_0)=(45,15,50,0)$}
    \label{f0}
   \end{center}
\end{figure}
\begin{table}[h]\begin{center}
\caption{}\label{t1}%
\begin{tabular}{@{}cccccccc@{}} \toprule
 $\mu_{max}$ ($1/h$) &$ k_s$ (g/l) &$ k_d$ (1/h)& $Y_{B/s}$  (g/g) &$ m_s $  (1/h) & $s_0$ (g/l)& $B_0$ (g/l) &$ P_0$ (g/l) \\ \midrule
   0.096 & 11.27  & 0.048 & 1.19 & 0.0047 & 50 & 15 & 0  \\
 \botrule
\end{tabular}\end{center}
\end{table}
\begin{table}[h]\begin{center}
\caption{}\label{t2}%
\begin{tabular}{@{}ccccc@{}} \toprule
   $\alpha$ & $k_H $  & $m_P$ (1/h) & $\frac{1}{Y_{P/s}}$ (g/g) &  $X_0$ (g/l)\\ \midrule
  0.2 & 0.176  & 0.002 & 0.2 & 45  \\
 \botrule
\end{tabular}\end{center}
\end{table}
\noindent The evolution curves over time of the organic matter $X$, the substrate $s$, the biomass $B$ and the product $P$, using the resolution of the system \eqref{model s1} by Matlab are given in Fig. \ref{f0}. We notice that the curve of the substrate $s$ is very close to that given experimentally in \cite{ma 2013} (Fig. 3, page 195). This is a validation of our model.
The curve of the biomass $B$ of the same reference in which the results are also experimental converges towards a stationary phase, contrary to our system where it tends towards 0. This is explained by the fact that the experiment in \cite{ma 2013} was carried out with fed-batch fermentation, while in the rhizosphere, the culture is discontinuous (batch) and forces the cell to go through exponential, stationary and finally decay phases.

\noindent After this first validation test, we consider  another validation test related to the formation of the product, using the  data of Table \ref{t3} and Table \ref{t4}  given in the references \cite{p2006p}, % and the references 
\cite{ma 2013} and  \cite{saleh}.
  In \cite{p2006p}   the authors highlighted the influence of various carbon substrates in the production of the cellulase protein using T. reesei 97.177 and Tm3. Cellulase shows the maximum yield of cellulose as a synthetic source.
Kinetic studies were also done for growth and production using the Monod equation and Leudeking Piret model, respectively.
Our results 
are given in Fig. \ref{f1} and  
are close to those of \cite{p2006p} (Fig. 5, page 1879).
%%%
\begin{figure}[h]
    \centering
%    \begin{minipage}[c]{.80\linewidth}
     \includegraphics[width=1.9in]{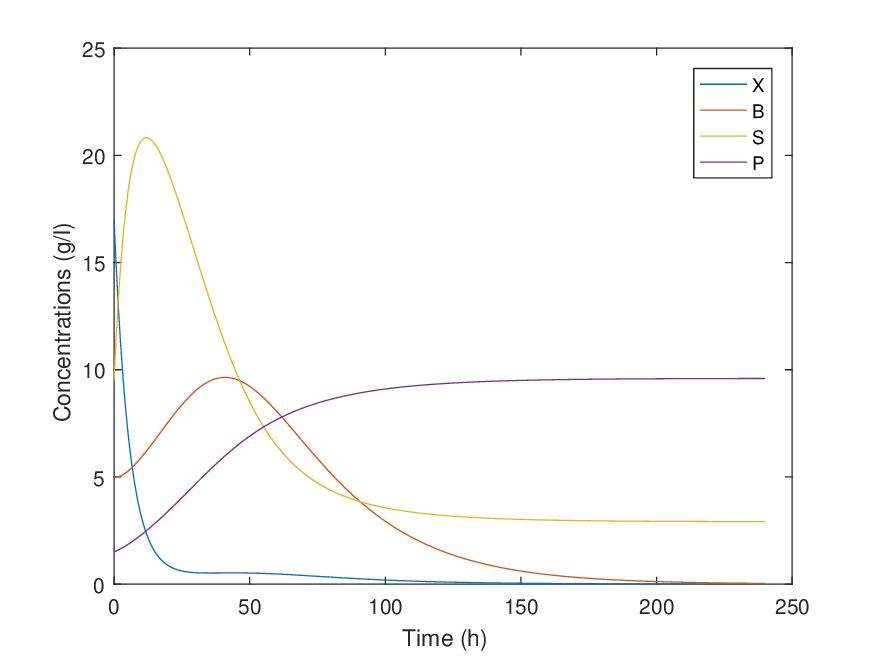}
    \caption{Evolution of $X$, $B$, $s$, $P$, using the parameters of  Table \ref{t3}  and  Table \ref{t4}
with  $(X_0,B_0,s_0,P_0)=(17,5,9.5,1.5)$}
    \label{f1}
    %\end{minipage}
\end{figure}
%%%
\begin{table}[h!]
\begin{center}
\caption{}\label{t3}%
\begin{tabular}{@{}ccccccc@{}} \toprule
  $\mu_{max}$ (1/h) & $k_s$ (g/l) & $\frac{1}{Y_{P/s}}$ (g/g) & $m_P$ (1/h)& $B_0$ (g/l)& $s_0$ (g/l) & $P_0$ (g/l) \\ \midrule
   0.2 & 35.55 &  0.2 & 0.002 & 5 & 9.5 & 1.5 \\
 \botrule
\end{tabular}\end{center}
\end{table}
\begin{table}[h!]\begin{center}
\caption{}\label{t4}%
\begin{tabular}{@{}cccc@{}} \toprule $\alpha$ & $k_H $ & $Y_{B/s}$ (g/g) & $m_s$ (1/h) \\ \midrule
   0.2 & 0.176 &  1.19 & 0.0047   \\
 \botrule
\end{tabular}   
\end{center}
\end{table}

\vspace{0.3cm}

\noindent\underline{\bf Numerical experiments}\\

\noindent \underline{\it First test: variation of $X_0$}. In Theorem \ref{pr1} we studied the asymptotic behaviour of the system \eqref{model s1}. We have highlighted, theoretically, the importance of the initial condition of organic matter in the production of enzymes (cellulase). Indeed we have shown that when time tends to infinity, the solution tends to an equilibrium point %of the form 
$(0,0,s^*, P^*)$ for specific values $s^*$ and $P ^*$, which depend on the parameters of the problem and the initial conditions.
  In the following test,  these theoretical considerations are confirmed numerically.\\
 We fix all the parameters and vary the initial condition of the organic matter $X_0$. The solution of the system \eqref{model s1} is
performed with the data of  Table \ref{t1}  and   Table \ref{t2}
for initial conditions: %varying from $ X_0 = 45$ to $X_0 = 360$:
\begin{align}
    X^{1}_{0}=45 \quad; \quad X^{2}_{0}=90 \quad; \quad X^{3}_{0}=180 \quad; \quad X^{4}_{0}=360.
\end{align}
\begin{figure}[h!]
    \begin{minipage}[c]{.5\linewidth}
    \centering
        \includegraphics[width=.8\textwidth]{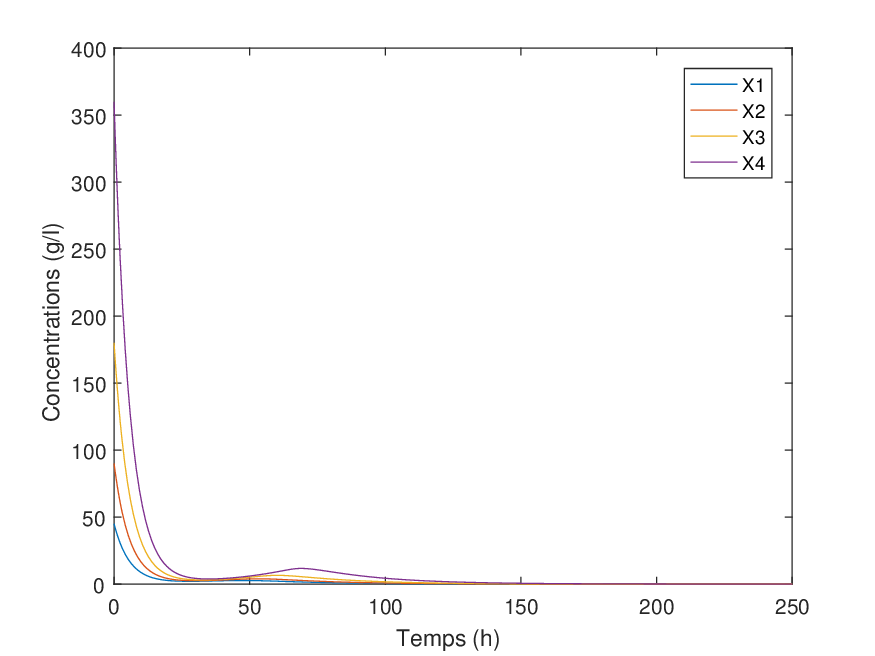}
    \caption{ Evolution of $X$ for\\ initial conditions
    $X^{i}_{0}$ for $i = 1,...,4$ .}
    \label{fx01}
    \end{minipage}
    \hfill%
    \begin{minipage}[c]{.47\linewidth} 
    \centering
        \includegraphics[width=.8\textwidth]{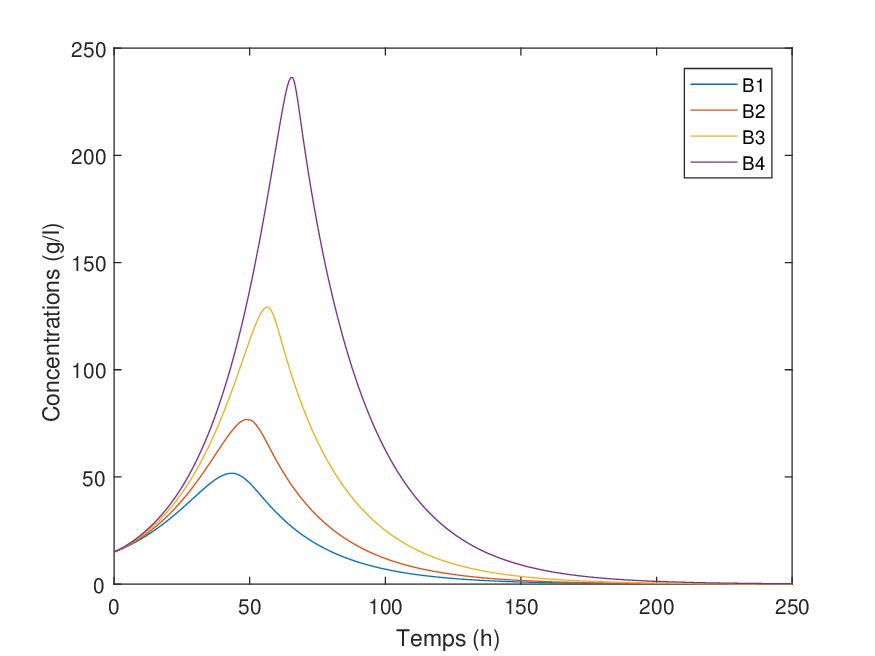}
    \caption{  Evolution of $B$ for\\ initial conditions  $X^{i}_{0}$ for $i = 1,...,4.$ 
}
    \label{fbx}
    \end{minipage}
\hfill%
\begin{minipage}[c]{.47\linewidth} 
\centering
        \includegraphics[width=.8\textwidth]{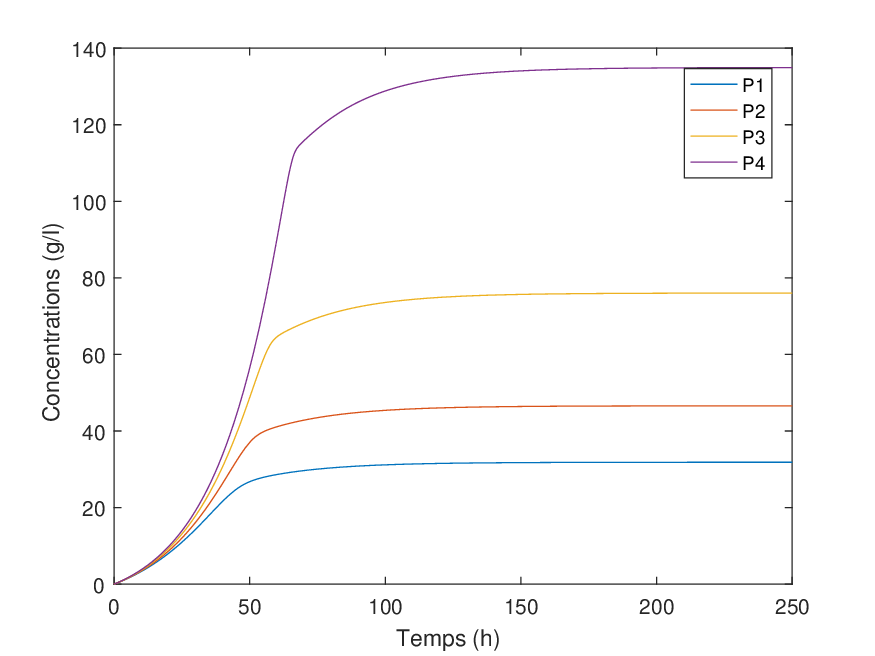}
    \caption{ Evolution of $P$ for \\ initial conditions $X^{i}_{0}$ for $i = 1,...,4.$
}
    \label{fp01}
     \end{minipage}
    \hfill%
    \begin{minipage}[c]{.47\linewidth} 
    \centering
        \includegraphics[width=.8\textwidth]{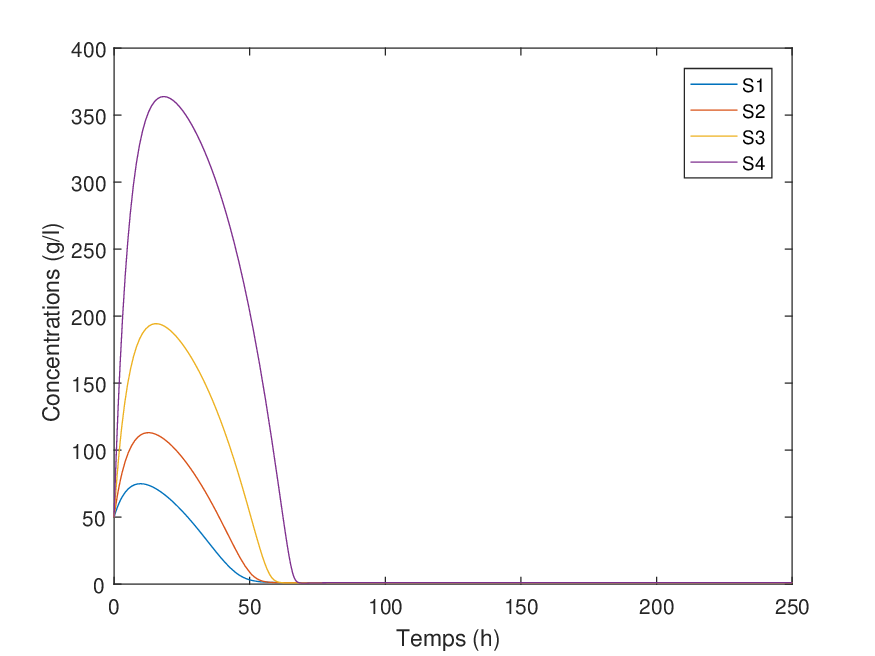}
    \caption{Evolution of s(.) for \\initial conditions $X^{i}_{0}$ for $i = 1,...,4.$
}
    \label{fsx}
     \end{minipage}
\end{figure}
The simulations in Figs. \ref{fx01}, \ref{fbx}, \ref{fsx} and \ref{fp01} describe the evolution, in the same time interval, of the variable $X_i$, $ B_i$, $s_i$  and $P_i$, respectively associated with the initial conditions $X^{i}_{0}$ for $i = 1,...,4$.
We notice as % we could  
        expected  in Theorem \ref{pr1} , that: %\vspace{-0.3cm}
        \begin{itemize}
            \item[-] if we have more initial organic matter, we will have more product formation, 
            \item[-] whatever the initial condition, the variables $X$ and $B$ tend to zero when time becomes large,
            \item[-] for an initial condition  $B_0 >0$, the variable $s(t)$ do not tend towards 0 but tends towards a value $s^*>0$, when time becomes large.
        \end{itemize} 
       % \vspace{-0.3cm}
  The limit values are summarised in Table \ref{t5}.\\ 
     %%%%%%%%%%%%%
   \begin{table}[h]
   \begin{center}
\caption{Limit values of $X$, $B$, $s$ and $P$ for different initial conditions $X_0$}\label{t5}
\begin{tabular}{@{}ccccc@{}} \toprule $X_0$ & $ X^* $ & $B^*$ & $s^*$ & $P^*$  \\  \midrule
   45 & 2,7402e-07 & 8,7454e-10  &1,1745  & 31,8399 \\  \midrule
   90 & 7,8690e-07 & 9,1595e-10  & 1,1761 &  46,5702\\ \midrule
   180 & 7,1684e-07 &  8,8018e-10 & 1,1766 & 76,0315  \\ \midrule
   360 & 2,3503e-07 &  8,3535e-10 & 1,1766 & 134,9544  \\\botrule
\end{tabular}\end{center}
\end{table}
%%%%%%%%%%%%%%%%%%%%%%%%%  
\noindent \underline{\it Second test: variation of $k_d$}. 
We now consider another test that allows us to observe
the effect of the variation of $k_d$ on the evolution of the concentrations of $X$, $B$, $s$, and $P$.   In this test, we consider the resolution of the system \eqref{model s1} with the data of Table~1 and Table~2 except for $\mumax$ which will take the value. %a maximum value 
$\mumax=0.2$, and we make vary the value of $k_d$ as %by the 
following % values
\begin{equation}
    k_d^{1}=0.03 \qquad; \qquad k_d^{2}=0.09 \qquad; \qquad k_d^{3}=0.12 \qquad; \qquad k_d^{4}=0.18.
\end{equation}
The simulations in Figs. \ref{kd0.03}, \ref{kd0.09}, \ref{kd0.12} and  \ref{kd0.18} represent the evolution, over time, of the variables
$X$, $B$, $s$ and $P$, respectively associated with the values of $k_d^{i}$ for $i=1,...4.$ The respective curves of $X$, $B$, $s$ and $P$ are presented in the same time interval to visualize their relative behaviours. As modelled in the equations, we notice, as expected, that 
when the value of $k_d$ increases: \\
\begin{figure}[h!]
    \begin{minipage}[c]{.47\linewidth}
    \centering
        \includegraphics[width=.8\textwidth]{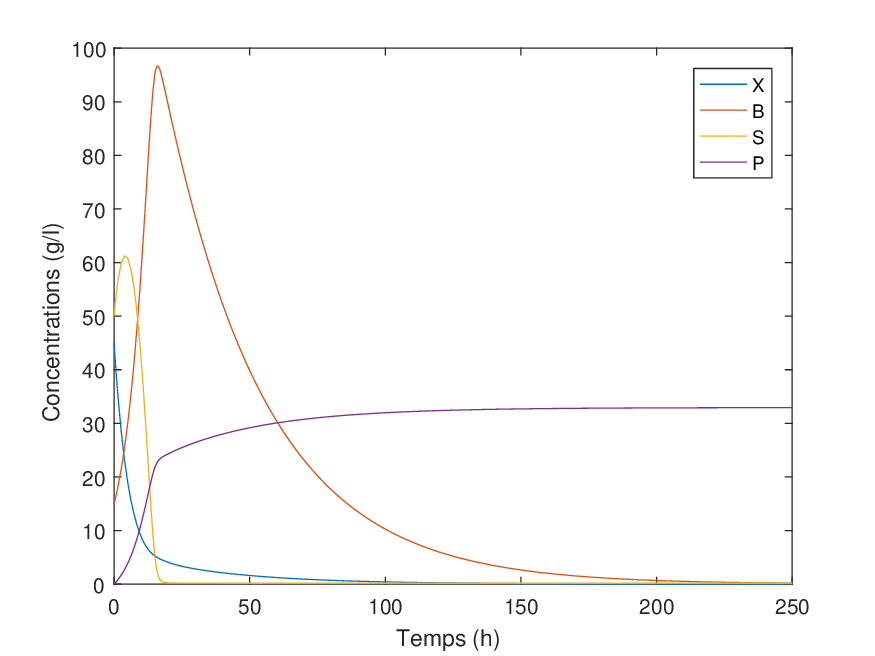}
    \caption{Evolution of $X(\cdot)$,\\ $B(\cdot)$, $s(\cdot)$ and  $P(\cdot)$ with $k_d=0.03$}
    \label{kd0.03}
    \end{minipage}
    \hfill%
    \begin{minipage}[c]{.47\linewidth} 
       \centering
        \includegraphics[width=.8\textwidth]{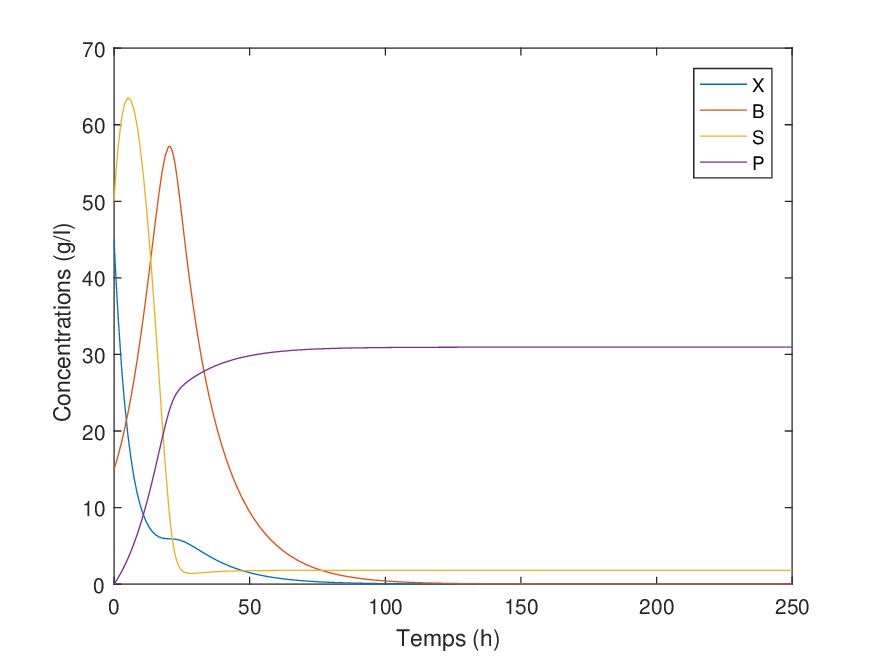}
    \caption{Evolution of $X(\cdot)$,\\ $B(\cdot)$, $s(\cdot)$ and  $P(\cdot)$  with $k_d=0.09$}
    \label{kd0.09}
    \end{minipage}
\hfill%
\begin{minipage}[c]{.47\linewidth} 
        \centering
        \includegraphics[width=.8\textwidth]{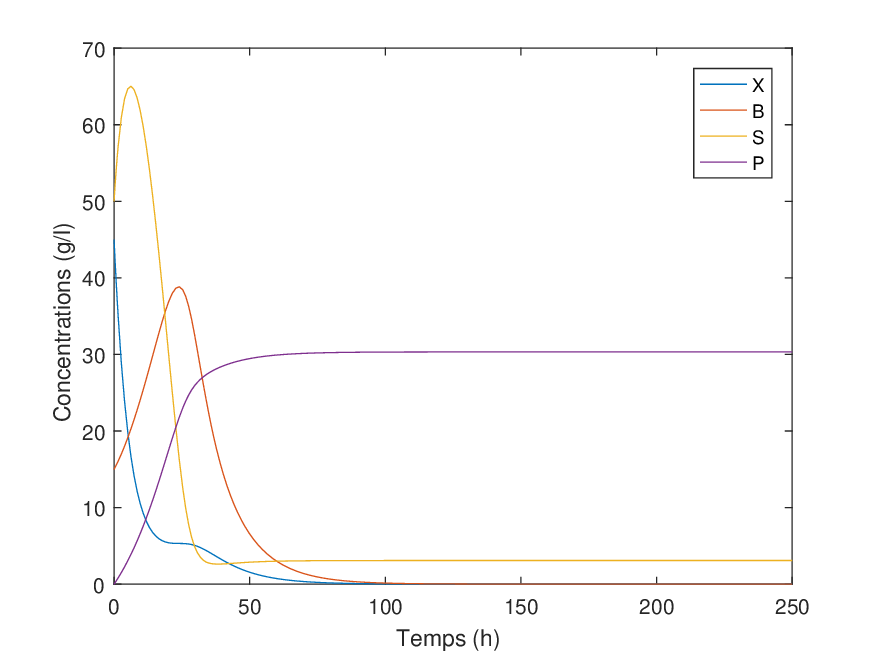}
    \caption{ Evolution of $X(\cdot)$,\\ $B(\cdot)$, $s(\cdot)$ and  $P(\cdot)$ with $k_d=0.12$ }
    \label{kd0.12}
     \end{minipage}
    \hfill%
    \begin{minipage}[c]{.47\linewidth}
        \centering
        \includegraphics[width=.8\textwidth]{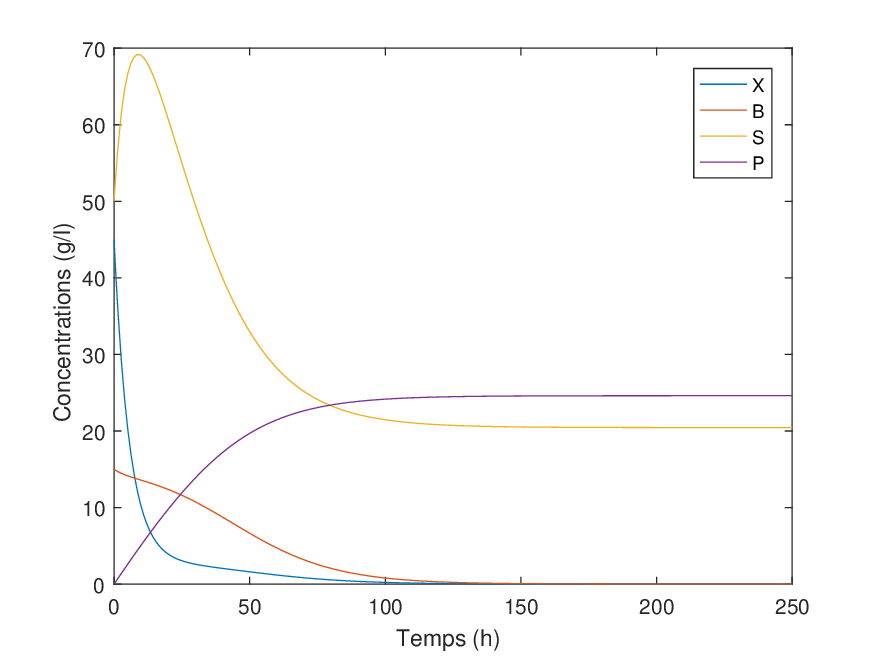}
    \caption{ Evolution of $X(\cdot)$,\\ $B(\cdot)$, $s(\cdot)$ and  $P(\cdot)$  with $k_d=0.18$}
    \label{kd0.18}
     \end{minipage}
\end{figure}
\noindent The maximum and limit values are summarised in Table \ref{t6}. 
     %%%%%%%%%%%%%
    \begin{table}[h]
    \begin{center}
\caption{ Maximum  and limit value of $B$, $s$ and $P$ for different values of $k_d$}\label{t6}
\begin{tabular}{@{}cccccc@{}} \toprule  $k_d$ & $B_{max}$ & $ s_{max} $ & $P_{max}=P^*$ & $s^*$ &  $B^*$ \\  \midrule
    0.03 & 96.6588 & 61.2420 & 32.9487 & 0.1629& 0 \\  \midrule
  0.09 & 57.1921 & 63.5208  & 30.9508 & 1.7971 & 0\\ \midrule
     0.12 & 38.8395 & 65.0240  & 30.3268 &3.0865  & 0 \\ \midrule
     0.18 & 15 & 69.1788  &24.6089  &20.4477  & 0 \\ \botrule
\end{tabular}   \end{center}
\end{table}
%%%%%%%%%%%%%%%%%%%%%%%%% 
\begin{itemize}
            \item[-]the maximum value of the the concentration of the biomass $B$  decreases (see Figs. \ref{kd0.03}, \ref{kd0.09}, \ref{kd0.12}, \ref{kd0.18} and Table \ref{t6} ).
            \item[-]the maximum value of the concentration of the substrate $s$ increases  and so (it is natural) it stabilizes 
             at  a larger value $s^*$ (see Figs. \ref{kd0.03}, \ref{kd0.09}, \ref{kd0.12}, \ref{kd0.18} and  Table \ref{t6} ),
            \item[-] The concentration of $P$
            remains an increasing function but the limit value decreases and is reached earlier (see Figs. \ref{kd0.03}, \ref{kd0.09}, \ref{kd0.12},  \ref{kd0.18} and   Table \ref{t6} ).
        \end{itemize}
\section{Conclusion}
We developed 
an unstructured mathematical model describing the growth kinetics of Trichoderma and the production of enzymes (cellulase) by degradation of a substrate (cellulose). 
This model is more complete than the references cited here. We integrated the hydrolysis step of organic matter in our description. Furthermore, using the theorem of stable and unstable varieties, and Barbalat's lemma, we showed that each trajectory of the system is bounded and converges to one of the non-hyperbolic equilibria depending on the initial conditions. Numerical simulations with data from the literature  %different values and various initial conditions 
confirm the theoretical study and validate the model.

\noindent Spatialising the model by introducing diffusion would constitute an immediate perspective of this work, which leads to a system of partial differential equations (PDE), of the reaction-diffusion type,  instead of ordinary differential equations. We expect steel to be a global attractor. 
Since Trichoderma positively impacts plant growth, it would be interesting to consider a PDE  system that models this impact and analyzes it mathematically.

\section*{Acknowledgments}
The authors warmly thank Pr. Ouazzani Touhami Amina from {\it Laboratoire des Productions végétales, animals et Agro-industries (Ibn Tofail University)} for all the discussions they had with her, allowing them to enrich their biological knowledge of Trichoderma.

\section*{ORCID}
\noindent asmae hardoul - \url{https://orcid.org/0000-0003-4768-7834}

\noindent zoubida mghazli - \url{https://orcid.org/0000-0003-0264-0637}

\appendix{}
\section{}\label{secA1}
%=============================
%{\sc Calculation %1}{page:Calculation1}
\underline{\bf Determination of equation \eqref{c} }\label{page: EquationZ}\\

Consider the variable  change \eqref{0} and introduce the function $L$ defined by
\begin{align}\label{a}
    L(t):=B(t) z(t)= X(t)+(s(t)-s^*)+\varphi B(t).
\end{align}
Then, using the expressions $\dfrac{dX}{dt}$, $\dfrac{dB}{dt}$ and $\dfrac{ds}{dt}$ given in the system \eqref{model s1}, we have
 \begin{eqnarray*}
   \frac{dL}{dt} &=& -K_H X(t)+\alpha k_d B(t)+K_H X(t)-\frac{1}{Y_{B/s}} \mu(s)B(t)-m_s B(t)+\varphi (\mu(s)B(t)- k_d B(t)).\\
                 &=&(\alpha k_d -m_s)B(t)+
        \left(-\frac{1}{Y_{B/s}} +\varphi\right)\mu(s)B(t)-\varphi k_dB(t).      
 \end{eqnarray*}
 Let 
 \[
\eta_1:=\left(-\frac{1}{Y_{B/s}} +\varphi\right)\mu(s)B(t)\quad\mbox{ and }\quad
\eta_2:=(\alpha k_d -m_s)B(t)-\varphi k_dB(t),
 \]
%This last expression is the sum of two terms 
which can be written as follows:

\begin{eqnarray*}
   \eta_1
          &=&\left[\frac{-1}{Y_{B/s}} +\frac{\frac{\mu(s^*)}{Y_{B/s}}-(\alpha k_d -m_s)}{\mu(s^*)-k_d} \right]\mu(s)B(t) .\\
          &=&\left[\frac{k_d -\alpha Y_{B/s}  k_d+Y_{B/s} m_s}{Y_{B/s}(\mu(s^*)-k_d)}\right]\mu(s)B(t) .\\
          &=&\left[-\frac{k_d (1-\alpha Y_{B/s})+Y_{B/s} m_s}{Y_{B/s}(k_d-\mu(s^*))}\right]\mu(s)B(t) .
 \end{eqnarray*}
 and
 \begin{align*}
     \eta _2 &:=(\alpha k_d-m_s) B(t)-\frac{\frac{\mu(s^*)}{Y_{B/s}}-(\alpha k_d -m_s)}{\mu(s^*)-k_d} k_d B(t).\\
             &=\left[\frac{(\mu(s^*)-k_d)Y_{B/s}(\alpha k_d -m_s) -\mu(s^*)k_d+{Y_{B/s}}(\alpha k_d -m_s)k_d}{Y_{B/s}(\mu(s^*)-k_d)}\right]B(t).\\
             &=\left[ \frac{Y_{B/s}(\alpha k_d -m_s) -k_d}{Y_{B/s}(\mu(s^*)-k_d)}\right]\mu(s^*) B(t).\\
             &=\left[\frac{k_d (1-\alpha Y_{B/s})+Y_{B/s} m_s}{Y_{B/s}(k_d-\mu(s^*))}\right]\mu(s^*) B(t).
 \end{align*}
 then
 \begin{align}\label{b2}
     \frac{dL}{dt}=\eta_1 +\eta_2=-\gamma(\mu(s)-\mu(s^*))B(t).
 \end{align}
 with
 \begin{align*}
     \gamma:=\left[\frac{k_d (1-\alpha Y_{B/s})+Y_{B/s} m_s}{Y_{B/s}(k_d-\mu(s^*))}\right] > 0 .
 \end{align*}
 
 By \eqref{a}, we have% nous avons
 \begin{align*}
     \frac{dB}{dt} z + B \frac{dz}{dt} =-\gamma(\mu(s)-\mu(s^*))B(t).
 \end{align*}
 Since $B(t)$ is assumed not to vanish, and using the second equation of \eqref{model s1}, we obtain
 % En utilisant la troisième équation de \eqref{model s1}, on obtient 
 \begin{align*}\label{c}
     \frac{dz}{dt} =-\gamma(\mu(s)-\mu(s^*)) -(\mu(s)-k_d)z
 \end{align*}
 which is the equation \eqref{c}.\\

 \underline{\bf Determination of the equation  \eqref{cP}}\label{page:Calcul2}\\

 Consider the variable change \eqref{0P} and let $K$ the function defined by

\begin{equation}
    \label{aP}
    K(t):=B(t) W(t)=(P(t)-P^*)+\omega B(t).
\end{equation} 

Then using 
the equations of the system \eqref{model s1}, we have 
\begin{align*}
   \frac{dK}{dt} &= \left(\frac{1}{Y_{P/s}} \mu(s)+m_P\right) B(t)+\omega \left(\mu(s)- k_d \right)B(t).\\
                 &=\left(\frac{1}{Y_{P/s}} %\mu(s)B(t)+
                 +\omega%\frac{\frac{-\mu(s^*)}{Y_{P/s}}-m_P}{\mu(s^*)-k_d}
                 \right)\mu(s)B(t)+\left(m_P -\omega k_d%\frac{\frac{-\mu(s^*)}{Y_{P/s}}-m_P}{\mu(s^*)-k_d} k_d 
                 \right)B(t).
 \end{align*}
Let
\[
\eta_3:=\left(\frac{1}{Y_{P/s}}      +\omega \right)\mu(s)B(t)\quad\mbox{ and }\quad \eta_4:=\left(m_P -\omega k_d \right)B(t)
\]
  which can be written as follows:
 \begin{align*}
   \eta_3 %&:= \frac{1}{Y_{P/s}} \mu(s)B(t)+\frac{\frac{-\mu(s^*)}{Y_{P/s}}-m_P}{\mu(s^*)-k_d} \mu(s)B(t) .\\
          &=\left[\frac{1}{Y_{P/s}} +\frac{\frac{-\mu(s^*)}{Y_{P/s}}-m_P}{\mu(s^*)-k_d} \right]\mu(s)B(t) .\\
          &=\left[\frac{\mu(s^*)-k_d-\mu(s^*)-Y_{P/s} m_P}{Y_{P/s}(\mu(s^*)-k_d)}\right]\mu(s)B(t) .\\
          &=\left[\frac{k_d +Y_{P/s} m_P}{Y_{P/s}(k_d-\mu(s^*))}\right]\mu(s)B(t) .
 \end{align*}
 and
 \begin{align*}
     \eta _4% &:=m_P B(t)-\frac{\frac{\mu(s^*)}{Y_{P/s}}-m_P}{\mu(s^*)-k_d} k_d B(t).\\
         &=\left[\frac{(\mu(s^*)-k_d)Y_{P/s}m_P +\mu(s^*)k_d+{Y_{P/s}}m_Pk_d}{Y_{P/s}(\mu(s^*)-k_d)}\right]B(t).\\
             &=\left[ \frac{Y_{P/s}m_P +k_d}{Y_{P/s}(\mu(s^*)-k_d)}\right]\mu(s^*) B(t).\\
             &=\left[-\frac{k_d +Y_{P/s} m_P}{Y_{P/s}(k_d-\mu(s^*))}\right]\mu(s^*) B(t).
 \end{align*}
 then
 \begin{align}\label{b2P}
     \frac{dK}{dt}=\eta_3 +\eta_4=\phi(\mu(s)-\mu(s^*))B(t).
 \end{align}
 with
 \begin{align*}
     \phi:=\left[\frac{k_d +Y_{P/s} m_P}{Y_{P/s}(k_d-\mu(s^*))}\right] > 0 .
 \end{align*}
 According to \eqref{aP}, we have
 \begin{align*}
     \frac{dB}{dt} W + B \frac{dW}{dt} =\phi(\mu(s)-\mu(s^*))B(t).
 \end{align*}
 Using the second equation of \eqref{model s1}, we obtain
 \begin{align*}\label{cP}
     \frac{dW}{dt} =\phi(\mu(s)-\mu(s^*)) -(\mu(s)-k_d)W,
 \end{align*}
which is the equation \eqref{cP}.

%\end{appendices}

\end{document}